\newtheorem{thm}{Theorem}[section]
\newtheorem{lem}[thm]{Lemma}
\theoremstyle{definition}
\numberwithin{equation}{section}
 \newcommand{\be}{\begin{equation}}
 \newcommand{\ee}{\end{equation}}
 \newcommand\bes{\begin{eqnarray}}
 \newcommand\ees{\end{eqnarray}}
 \newcommand{\bess}{\begin{eqnarray*}}
 \newcommand{\eess}{\end{eqnarray*}}
\begin{document}
\title[two species chemotaxis system with signal absorption]
      {Boundedness and stabilization in a two-species chemotaxis system with signal absorption}%
\author[Zhang]{Qingshan Zhang }%
\address{Department of Mathematics, Henan Institute of Science and Technology, Xinxiang 453003, PR China}
\email{qingshan11@yeah.net}

%%%%P13 address changed (we have some new administrative rules)

\author[Tao]{Weirun Tao }%
\address{Institute for Applied Mathematics, School of Mathematics, Southeast University, Nanjing 211189,
PR China}
\email{taoweiruncn@163.com}

%\thanks{Supported in part by National Natural Science Foundation of China (No. 11171063).}

\subjclass[2010]{35A09, 35B40, 35K55, 92C17.}%
\keywords{chemotaxis system, global solution, boundedness, stabilization.}

%\date{}%
%\dedicatory{}%
%\commby{}%
% ----------------------------------------------------------------

\begin{abstract}
This paper is concerned with the Neumann initial-boundary value problem for the two-species chemotaxis system with consumption of chemoattractant
\begin{eqnarray*}
  \left\{\begin{array}{lll}
     \medskip
    u_t=\Delta u-\chi_1\nabla\cdot(u\nabla w),&{} x\in\Omega,\ t>0,\\
     \medskip
     v_t=\Delta v-\chi_2\nabla\cdot(v\nabla w),&{} x\in\Omega,\ t>0,\\
     \medskip
     w_t=\Delta w-(\alpha u+\beta v)w,&{}x\in\Omega,\ t>0
  \end{array}\right.
\end{eqnarray*}
in a smooth bounded domain $\Omega\subset\mathbb{R}^n$ ($n\geq2$), where the parameters $\chi_1$, $\chi_2$, $\alpha$ and $\beta$ are positive. It is proved that if
\begin{equation*}
\max\{\chi_1,\chi_2\}\|w(x,0)\|_{L^{\infty}(\Omega)}<\sqrt{\frac{2}{n}}\pi
\end{equation*}
the problem possesses a unique global classical solution that is uniformly bounded. Moreover, we prove that
\begin{equation*}
u(x,t)\to\frac{1}{|\Omega|}\int_{\Omega}u(x,0),\quad v(x,t)\to\frac{1}{|\Omega|}\int_{\Omega}v(x,0)\quad\mbox{and}\quad w(x,t)\to0\quad\mbox{as}\ t\to\infty
\end{equation*}
uniformly with respect $x\in\Omega$.
\end{abstract}
\maketitle
% ----------------------------------------------------------------

\section{Introduction}
In this paper, we deal with the Neumann initial-boundary value problem for the chemotaxis system
\begin{eqnarray}\label{CSC}
  \left\{\begin{array}{lll}
     \medskip
     u_t=\Delta u-\chi_1\nabla\cdot(u\nabla w),&{} x\in\Omega,\ t>0,\\
     \medskip
     v_t=\Delta v-\chi_2\nabla\cdot(v\nabla w),&{} x\in\Omega,\ t>0,\\
     \medskip
     w_t=\Delta w-(\alpha u+\beta v)w,&{}x\in\Omega,\ t>0,\\
     \medskip
     \frac{\partial u}{\partial \nu}=\frac{\partial v}{\partial \nu}=\frac{\partial w}{\partial \nu}=0,&{} x\in\partial\Omega, t>0,\\
     \medskip
     u(x,0)=u_0(x),\quad v(x,0)=v_0(x),\quad w(x,0)=w_0(x),&{}x\in\Omega
  \end{array}\right.
\end{eqnarray}
in the bounded domain $\Omega\subset\mathbb{R}^n$ $(n\geq2)$ with smooth boundary, where $\chi_1$, $\chi_2$, $\alpha$ and $\beta$ are positive constants, the unknown functions $u=u(x,t)$ and $v=v(x,t)$ stand for polulation densities and $w=w(x,t)$ denotes the concentration of chemoattractant. The symbol $\frac{\partial}{\partial \nu}$ represents differentiation with respect to the outward normal $\nu$ on $\partial\Omega$. The initial data $u_0$, $v_0$ and $w_0$ are given positive functions satisfying
\begin{equation}\label{initial data}
u_0\in C^0(\bar{\Omega}),\quad v_0\in C^0(\bar{\Omega})\quad\mbox{and}\quad w_0\in W^{1,q}(\Omega)
\end{equation}
with some $q>n$.

\vskip 3mm

The model (\ref{CSC}) is used in mathematical biology to describe the movement of two populations in respond to the concentration gradient of one common chemical signal. It is a generalization of the famous  Keller-Segel system \cite{Keller&Segel-JTB-1971}
\begin{eqnarray}\label{one species}
  \left\{\begin{array}{lll}
     \medskip
     u_t=\Delta u-\chi\nabla\cdot(u\nabla w),\\
     \medskip
     w_t=\Delta w-uw,
  \end{array}\right.
\end{eqnarray}
%The signal absorption mechanism in (\ref{one species}) has a significant effect to the dynamical behavior of the population.
which has been studied from a mathematical viewpoint in the last years. It is proved that if $n\leq2$ or
\begin{equation*}
\chi\|w(x,0)\|_{L^{\infty}(\Omega)}<\frac{1}{6(n+1)}
\end{equation*}
in higher dimensions $n\geq3$, the problem (\ref{one species}) possesses a unique global classical solution which is bounded and satisfies
\begin{equation}\label{convergence property}
u(x,t)\to\frac{1}{|\Omega|}\int_{\Omega}u_0\quad \mbox{and}\quad w(x,t)\to0\quad \mbox{as}\ t\to\infty
\end{equation}
uniformly with respect to $x\in\Omega$ \cite{Tao-JMAA-2011,Tao&Winkler-JDE-2012,zhang&li-jmp-2015}. Moreover, the problem admits at least one global weak solution which is eventually smooth and enjoys the convergence properties (\ref{convergence property}) in bounded convex domain $\Omega\subset\mathbb{R}^3$ \cite{Tao&Winkler-JDE-2012}. Recently, some blow-up properties of (\ref{CSC}), including blow-up criteria for the local classical solution, lower global blow-up estimate on $\|u\|_{L^{\infty}(\Omega)}$ and local non-degeneracy property for the blow-up points, have been obtained in \cite{Jiang-jde-2018}. Furthermore, global solutions and the stabilization for the corresponding variants of (\ref{one species}), such as chemotaxis-consumption systems with tensor-valued sensitivities \cite{Baghaei-zamm-2018,Xue-M3AS-2015,Winkler-SJMA-2015,zhang-mn-2016} and singular sensitivities \cite{Winkler-M3AS-2016}, system (\ref{one species}) with logistic source \cite{MR3690294,Baghaei-mmas-2017} or coupled chemotaxis-fluid system \cite{Lorz-M3AS-2010,Winkler-ARMA-2014,Winkler-CPDE-2012,winkler2014global} have also been investigated. For more results on the model variations of (\ref{one species}), we refer to the recent survey \cite{Bellomo} and the references therein.

As to the problem (\ref{CSC}) with Lotka-Volterra competitive kinetics
\begin{eqnarray*}
  \left\{\begin{array}{lll}
     \medskip
     u_t=\Delta u-\chi_1\nabla\cdot(u\nabla w)+\mu_1u(1-u-a_1v),&{} x\in\Omega,\ t>0,\\
     \medskip
     v_t=\Delta v-\chi_2\nabla\cdot(v\nabla w)+\mu_2v(1-a_2u-v),&{} x\in\Omega,\ t>0,\\
     \medskip
     w_t=\Delta w-(\alpha u+\beta v)w,&{}x\in\Omega,\ t>0,\\
     \medskip
     \frac{\partial u}{\partial \nu}=\frac{\partial v}{\partial \nu}=\frac{\partial w}{\partial \nu}=0,&{} x\in\partial\Omega, t>0,\\
     \medskip
     u(x,0)=u_0(x),\quad v(x,0)=v_0(x),\quad w(x,0)=w_0(x),&{}x\in\Omega,
  \end{array}\right.
\end{eqnarray*}
it is shown that solutions exist globally and remain bounded if either $n=2$ \cite{Hirata-JDE-2017} or
\begin{equation*}
\chi_i\|w_0\|_{L^{\infty}(\Omega)}<\frac{\pi}{\sqrt{n+1}},\quad i=1,2
\end{equation*}
in the case $n\geq3$ \cite{MR3741393}, and if $a_1$, $a_2\in(0,1)$ these solutions satisfy
\begin{equation*}
(u(\cdot,t), v(\cdot,t), w(\cdot,t))\to\left(\frac{1-a_1}{1-a_1a_2}, \frac{1-a_2}{1-a_1a_2}, 0 \right)\quad\mbox{in}\ L^{\infty}(\Omega)
\end{equation*}
and if $a_1\geq1>a_2$,
\begin{equation*}
(u(\cdot,t), v(\cdot,t), w(\cdot,t))\to(0, 1, 0)\quad\mbox{in}\ L^{\infty}(\Omega)
\end{equation*}
as $t\to\infty$ \cite{Hirata-JDE-2017,MR3741393}. On the other hand, the global existence and stabilization of (weak) solutions to the two-species chemotaxis-fluid system with Lotka-Volterra competitive kinetics have also been   established, see e.g. \cite{Hirata-JDE-2017,Cao-MMAS-2018,Hirata-1710.00957,Jin&Xiang-1706.07910}. However, to the best of our knowledge, the problem (\ref{CSC}) seems not be investigated yet in the literature.

In the present paper, we prove global existence, boundedness and stabilization of classical solutions to the problem (\ref{CSC}). Our main results are stated as follows.

\begin{thm}\label{main result1}
Let $\Omega\subset\mathbb{R}^n$ $(n\geq2)$ be bounded domain with smooth boundary and let the parameters $\chi_1, \chi_2, \alpha, \beta>0$. If
\begin{equation*}
\chi_i\|w_0\|_{L^{\infty}(\Omega)}<\sqrt{\frac{2}{n}}\pi,\quad i=1,2,
\end{equation*}
then for any initial data $u_0$, $v_0$ and $w_0$ satisfying {\rm{(\ref{initial data})}}, the problem {\rm{(\ref{CSC})}} possesses a unique global classical solution which is bounded in $\Omega\times(0, \infty)$.
\end{thm}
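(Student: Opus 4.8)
The plan is to run the standard three-step programme for chemotaxis systems with signal consumption, arranged so that the hypothesis $\chi_i\|w_0\|_{L^{\infty}(\Omega)}<\sqrt{2/n}\,\pi$ enters at a single place.

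\emph{Local theory and elementary bounds.} A routine contraction argument (Amann's theory for triangular parabolic systems is applicable) produces a unique maximal classical solution $(u,v,w)$ on $[0,T_{\max})$, together with the blow-up alternative that $T_{\max}<\infty$ forces $\limsup_{t\uparrow T_{\max}}\big(\|u(\cdot,t)\|_{L^{\infty}(\Omega)}+\|v(\cdot,t)\|_{L^{\infty}(\Omega)}+\|w(\cdot,t)\|_{W^{1,q}(\Omega)}\big)=\infty$. The maximum principle gives $u,v>0$ on $\bar\Omega\times(0,T_{\max})$, and since $u,v\ge0$ makes $w_t\le\Delta w$, it also gives $0<w(\cdot,t)\le\|w_0\|_{L^{\infty}(\Omega)}=:W$; integrating the first two equations over $\Omega$ and using the Neumann conditions yields conservation of $\int_{\Omega}u$ and $\int_{\Omega}v$. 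The role of the pointwise bound on $w$ is to confine the range of $w$ to the fixed interval $[0,W]$.

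\emph{The weighted $L^{p}$-estimate (the crux).} I would establish, for a suitable $p>\tfrac n2$, a uniform-in-time bound $\|u(\cdot,t)\|_{L^{p}(\Omega)}+\|v(\cdot,t)\|_{L^{p}(\Omega)}\le C$ on $(0,T_{\max})$. The assumption $\chi_i W<\sqrt{2/n}\,\pi$ for $i=1,2$ is precisely what makes the interval $\big(\tfrac n2,\ \pi^{2}/(\max\{\chi_1,\chi_2\}W)^{2}\big)$ nonempty, so fix $p$ in it, whence $\sqrt p\,\chi_i W<\pi$ for both $i$. For the first population set $\varphi(s):=\cos\!\big(\tfrac{\sqrt p\,\chi_1}{2}\,s\big)$; then $\tfrac{\sqrt p\,\chi_1}{2}W<\tfrac\pi2$ forces $\varphi$ to be positive and nonincreasing on $[0,W]$ (hence pinched between two positive constants), and $\varphi$ solves $\varphi''+\tfrac{p\chi_1^{2}}{4}\varphi=0$. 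Testing the $u$-equation against $u^{p-1}\varphi(w)^{1-p}$, substituting $w_t=\Delta w-(\alpha u+\beta v)w$, and integrating by parts — all boundary integrals vanish because $\partial_{\nu}u=\partial_{\nu}w=0$, so no convexity of $\Omega$ is needed — one obtains, after using Young's inequality to absorb the mixed term $\nabla u\cdot\nabla w$ into the dissipation term $-(p-1)\int_{\Omega}u^{p-2}\varphi(w)^{1-p}|\nabla u|^{2}$, that the coefficient of the potentially dangerous quantity $\int_{\Omega}u^{p}\varphi(w)^{-p-1}|\nabla w|^{2}$ equals a positive multiple of $\varphi\big(\varphi''+\tfrac{p\chi_1^{2}}{4}\varphi\big)$, which vanishes by the choice of $\varphi$. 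The only surviving term is a multiple of $\int_{\Omega}u^{p}\varphi(w)^{-p}\varphi'(w)\,(\alpha u+\beta v)w$, which is $\le 0$ since $\varphi'\le0$ and $u,v,w\ge0$. Hence $\tfrac{d}{dt}\int_{\Omega}u^{p}\varphi(w)^{1-p}\le0$, and the claimed bound for $u$ follows; the same computation with $\chi_2$ replacing $\chi_1$ controls $v$. The two species are handled independently, the coupling entering only through the crude bound $0\le w\le W$, which is why $\alpha$ and $\beta$ do not affect the threshold.

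\emph{Bootstrap and conclusion.} With $u,v$ bounded in $L^{p}(\Omega)$ uniformly in $t$ for some $p>\tfrac n2$ and $0\le w\le W$, the source $(\alpha u+\beta v)w$ is bounded in $L^{p}(\Omega)$, and the usual iteration based on $L^{p}$--$L^{q}$ smoothing estimates for the Neumann heat semigroup — improving the integrability of $\nabla w$ via $w_t=\Delta w-(\alpha u+\beta v)w$, then that of $u$ and $v$ via their drift--diffusion equations, and repeating — raises $\|u(\cdot,t)\|_{L^{r}(\Omega)}+\|v(\cdot,t)\|_{L^{r}(\Omega)}$ to finite bounds for every $r<\infty$ and, after finitely many steps, to an $L^{\infty}$-bound; parabolic regularity then bounds $\|w(\cdot,t)\|_{W^{1,q}(\Omega)}$ as well. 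Since all these bounds are uniform in $t\in(0,T_{\max})$, the blow-up alternative forces $T_{\max}=\infty$, and the same bounds yield the asserted global boundedness. The one genuinely non-routine ingredient is the weighted estimate: recognizing that the test function $u^{p-1}\varphi(w)^{1-p}$ with the cosine weight annihilates the $\int_{\Omega}u^{p}|\nabla w|^{2}$-contribution, and that the constraint $\tfrac{\sqrt p\,\chi_i}{2}W<\tfrac\pi2$ from positivity of $\varphi$, together with the requirement $p>\tfrac n2$ needed to start the bootstrap, is satisfiable exactly when $\chi_i\|w_0\|_{L^{\infty}(\Omega)}<\sqrt{2/n}\,\pi$.
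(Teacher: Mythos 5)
Your proposal is correct, and its skeleton coincides with the paper's: local theory plus the bounds $0\le w\le \|w_0\|_{L^\infty}$ and mass conservation, a weighted $L^p$-estimate for some $p>\tfrac n2$ as the crucial step, and a Neumann-heat-semigroup bootstrap (the paper's Lemma \ref{improved extensibility}) to reach $L^\infty$ and $W^{1,q}$ and invoke the extensibility criterion. Where you genuinely differ is in the implementation of the crucial step. The paper constructs a one-parameter family of weights $\varphi=e^{z}$ with $z$ defined through an integral of a tangent (Lemma \ref{weight function}, with a free $\varepsilon\in(0,1)$), keeps an $\varepsilon$-portion of the dissipation, and closes via Gagliardo--Nirenberg and an ODE inequality $\tfrac1p y'+y\le C$; strictness of the hypothesis is then recovered by a careful choice of $\varepsilon_i$ and $p_i$. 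You instead take the explicit weight $\psi(w)=\cos^{1-p}\!\bigl(\tfrac{\sqrt p\,\chi_1}{2}w\bigr)$, and your claimed cancellation is genuine: writing $\psi=\varphi^{1-p}$ with $\varphi''+\tfrac{p\chi_1^2}{4}\varphi=0$, the coefficient of $\int_\Omega u^p|\nabla w|^2$ left after absorbing the cross term into $-p(p-1)\int_\Omega u^{p-2}\psi|\nabla u|^2$ is exactly $(p-1)\varphi^{-p}\bigl(\varphi''+\tfrac{p\chi_1^2}{4}\varphi\bigr)=0$, while the consumption term has the sign of $-\psi'=(p-1)\varphi^{-p}\varphi'\le0$; hence $\tfrac{d}{dt}\int_\Omega u^p\psi(w)\le0$ and the $L^p$-bound follows from $\cos\bigl(\tfrac{\sqrt p\,\chi_1}{2}\|w_0\|_{L^\infty}\bigr)>0$, with the same argument for $v$ using $\chi_2$. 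This buys a cleaner statement (a monotone functional, no Gagliardo--Nirenberg, no $\varepsilon$-bookkeeping), and the admissibility constraints $p>\tfrac n2$ and $\sqrt p\,\chi_i\|w_0\|_{L^\infty}<\pi$ are simultaneously satisfiable exactly under the hypothesis $\chi_i\|w_0\|_{L^\infty}<\sqrt{2/n}\,\pi$, so the threshold matches the paper's; what the paper's more elaborate weight buys is a slightly more flexible smallness condition (the extra $\arctan$ term in (\ref{condition})), which is not needed for the theorem as stated. Your final bootstrap is only sketched (the paper's Lemma \ref{improved extensibility} does it in one pass via interpolation against the conserved $L^1$-norm rather than a finite iteration), but that part is routine and standard.
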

The second result concerns stabilization of the solution provided by Theorem \ref{main result1}.
\begin{thm}\label{main result2}
Under the assumptions of Theorem \ref{main result1}, the global classical solution of {\rm{(\ref{CSC})}} satisfies
\begin{eqnarray*}
&&\|u(\cdot,t)-\bar{u}_0\|_{L^{\infty}(\Omega)}\to0,\\
&&\|v(\cdot,t)-\bar{v}_0\|_{L^{\infty}(\Omega)}\to0,\\
&&\|w(\cdot,t)\|_{L^{\infty}(\Omega)}\to0
\end{eqnarray*}
as $t\to\infty$, where $\bar{u}_0:=\frac{1}{|\Omega|}\int_{\Omega}u_0$ and $\bar{v}_0:=\frac{1}{|\Omega|}\int_{\Omega}v_0$.
\end{thm}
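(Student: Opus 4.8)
The plan is to combine the uniform-in-time bounds already provided by Theorem \ref{main result1} with a decay analysis driven by the absorption term in the $w$-equation. First I would record the two obvious invariances: integrating the $u$- and $v$-equations over $\Omega$ and using the Neumann boundary conditions shows $\int_\Omega u(\cdot,t)\equiv\int_\Omega u_0$ and $\int_\Omega v(\cdot,t)\equiv\int_\Omega v_0$ for all $t>0$, so $\bar u_0,\bar v_0$ are the only possible limits. The decay of $w$ is where the consumption structure enters: since $w>0$ solves $w_t=\Delta w-(\alpha u+\beta v)w\le\Delta w$ with homogeneous Neumann data, the comparison principle gives $\|w(\cdot,t)\|_{L^\infty(\Omega)}\le\|w_0\|_{L^\infty(\Omega)}$, but to get actual decay to $0$ I would instead test the $w$-equation (or rather the equation for $w^p$) and use the lower bound $\int_\Omega u(\cdot,t)\ge\int_\Omega u_0>0$ together with a Poincaré–type inequality, or more simply exploit that $\int_0^\infty\int_\Omega(\alpha u+\beta v)w<\infty$ (obtained by integrating the $w$-equation in space and time) to conclude $\int_\Omega w(\cdot,t)\to0$ along a sequence, and then upgrade this to genuine convergence by a smoothing/interpolation argument. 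Concretely, I expect the cleanest route is: establish $\|w(\cdot,t)\|_{L^\infty(\Omega)}\to0$ first, since $u,v$ are bounded in, say, $L^\infty((0,\infty);L^q(\Omega))$, so the reaction coefficient $\alpha u+\beta v$ is bounded in a suitable space, and standard $L^p$–$L^q$ estimates for the Neumann heat semigroup applied to the Duhamel representation $w(\cdot,t)=e^{t\Delta}w_0-\int_0^t e^{(t-s)\Delta}\big[(\alpha u+\beta v)w\big](\cdot,s)\,ds$ yield decay once one knows $\|(\alpha u+\beta v)w\|$ is integrable in time.

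The heart of the matter is then the stabilization of $u$ and $v$ toward their (time-independent) means. Here I would exploit that $w\to0$ makes the cross-diffusion term asymptotically small. A natural functional to differentiate is $\int_\Omega(u-\bar u_0)^2$: computing its time derivative using the first PDE gives
\begin{equation*}
\frac{d}{dt}\int_\Omega (u-\bar u_0)^2 = -2\int_\Omega |\nabla u|^2 + 2\chi_1\int_\Omega u\,\nabla w\cdot\nabla u,
\end{equation*}
and the last term is controlled by $\varepsilon\int_\Omega|\nabla u|^2 + C_\varepsilon\int_\Omega u^2|\nabla w|^2$. Absorbing the gradient term and using the Poincaré inequality $\int_\Omega|\nabla u|^2\ge C_P\int_\Omega(u-\bar u_0)^2$, one arrives at a differential inequality of the form $y'(t)\le -c\,y(t)+g(t)$ with $y(t)=\int_\Omega(u-\bar u_0)^2$ and $g(t)=C\int_\Omega u^2|\nabla w|^2$. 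To close this I need $g(t)\to0$ (and, ideally, $\int_0^\infty g<\infty$); this requires a decay estimate for $\|\nabla w(\cdot,t)\|_{L^\infty(\Omega)}$ or at least for $\|\nabla w(\cdot,t)\|_{L^r(\Omega)}$ with $r$ large, which again follows from parabolic regularity applied to the $w$-equation once $\|w\|_{L^\infty}$ decays — this is the standard mechanism by which decay of $w$ bootstraps to decay of $\nabla w$. Feeding $g(t)\to0$ into the ODE inequality gives $y(t)\to0$, i.e. $u\to\bar u_0$ in $L^2(\Omega)$, and an entirely symmetric argument handles $v\to\bar v_0$ in $L^2(\Omega)$.

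Finally I would upgrade the $L^2$ convergence to uniform convergence. Since Theorem \ref{main result1} furnishes a uniform bound on $u,v$ in $C^{2+\theta,1+\theta/2}(\bar\Omega\times[t,t+1])$ — via Schauder/parabolic Hölder estimates, given the boundedness of $w$ and its derivatives — the orbits $\{u(\cdot,t)\}_{t\ge1}$ and $\{v(\cdot,t)\}_{t\ge1}$ are relatively compact in $C^0(\bar\Omega)$; hence every subsequential limit in $C^0(\bar\Omega)$ must agree with the $L^2$-limit, forcing $\|u(\cdot,t)-\bar u_0\|_{L^\infty(\Omega)}\to0$ and $\|v(\cdot,t)-\bar v_0\|_{L^\infty(\Omega)}\to0$. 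The analogous compactness argument (or directly the semigroup decay estimate) gives $\|w(\cdot,t)\|_{L^\infty(\Omega)}\to0$. The main obstacle I anticipate is the quantitative decay of $w$ and $\nabla w$: one must carefully track, using the uniform bounds on $u,v$ and the smoothing properties of the Neumann heat semigroup, that $\|w(\cdot,t)\|_{L^\infty(\Omega)}$ (and then $\|\nabla w(\cdot,t)\|_{L^\infty(\Omega)}$) actually tends to zero rather than merely staying bounded — this is where the positivity of $\alpha,\beta$ and the mass lower bounds $\int_\Omega u_0,\int_\Omega v_0>0$ are essential, and getting an honestly small $g(t)$ in the ODE inequality is the crux of the whole argument.
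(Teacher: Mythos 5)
There is a genuine gap, and it lies exactly where you yourself locate the ``crux'': your plan proves $\|w(\cdot,t)\|_{L^\infty(\Omega)}\to0$ \emph{first} and then feeds the decay of $w$ (and of $\nabla w$) into the stabilization of $u$ and $v$. But the mechanism you sketch for the first step does not work. From the Duhamel formula, $e^{t\Delta}w_0\to\frac{1}{|\Omega|}\int_\Omega w_0>0$ as $t\to\infty$: the Neumann heat semigroup damps only the mean-free part, so $L^p$--$L^q$ smoothing plus time-integrability of $\|(\alpha u+\beta v)w\|$ cannot by itself produce decay of $w$. Equivalently, $\frac{d}{dt}\int_\Omega w=-\int_\Omega(\alpha u+\beta v)w\le0$ shows $\int_\Omega w$ is nonincreasing, but nothing in your argument forces its limit to be zero: that requires a positive lower bound on the absorption coefficient $\alpha u+\beta v$ on the region where $w$ lives, and such a bound is only available \emph{after} one knows $u\to\bar{u}_0>0$ and $v\to\bar{v}_0>0$ uniformly (mass conservation alone does not prevent $u,v$ from being small where $w$ is large). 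This is precisely why the paper runs the argument in the opposite order: it first proves $u(\cdot,t)\to\bar{u}_0$, $v(\cdot,t)\to\bar{v}_0$ in $C^0(\bar\Omega)$, and only then obtains $w\to0$ (in fact exponentially) from the differential inequality $w_t\le\Delta w-\frac{1}{2}(\alpha\bar{u}_0+\beta\bar{v}_0)w$ for large $t$ and the comparison principle. As written, your chain ``$u,v$ convergence $\Leftarrow$ decay of $\nabla w$ $\Leftarrow$ decay of $w$'' has an unsupported first link.

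The good news is that your $u,v$ step can be closed without any decay of $w$ or $\nabla w$, which removes the circularity. Testing the third equation with $w$ gives $\frac{1}{2}\frac{d}{dt}\int_\Omega w^2+\int_\Omega|\nabla w|^2=-\int_\Omega(\alpha u+\beta v)w^2\le0$, hence $\int_0^\infty\int_\Omega|\nabla w|^2\le\frac{1}{2}\int_\Omega w_0^2$. Then your source term $g(t)=C\int_\Omega u^2|\nabla w|^2$ is already in $L^1(0,\infty)$ by the $L^\infty$-bound on $u$ from Theorem \ref{main result1}, and the inequality $y'\le-cy+g$ with $g\in L^1(0,\infty)$ suffices to give $y(t)\to0$; you do not need $g(t)\to0$, nor any estimate on $\|\nabla w\|_{L^\infty}$ or $\|\nabla w\|_{L^r}$. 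This is essentially the paper's route: it derives $\int_0^\infty\int_\Omega|\nabla u|^2<\infty$ and $\int_0^\infty\int_\Omega|\nabla v|^2<\infty$ by testing with $u$ and $v$ and using the above bound on $\nabla w$, then Poincar\'e gives $\int_0^\infty\int_\Omega(u-\bar{u}_0)^2<\infty$, and uniform H\"older estimates (via Porzio--Vespri, which only need $|\nabla w|^2\in L^\infty((0,\infty);L^{q/2}(\Omega))$ from Theorem \ref{main result1} -- note your appeal to full Schauder bounds in $C^{2+\theta,1+\theta/2}$ would itself require H\"older regularity of $\nabla w$ and extra bootstrapping) combine with a standard lemma to upgrade to convergence in $C^0(\bar\Omega)$. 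Once $u,v$ have converged to their positive means, your (and the paper's) comparison argument delivers $\|w(\cdot,t)\|_{L^\infty(\Omega)}\to0$.
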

The rest of the paper is organized as follows. In Section 2, we list some preliminaries. Section 3 is devoted to the proof of Theorem \ref{main result1}. Finally, we prove Theorem \ref{main result2} in Section 4.

\section{Preliminaries}
As a preparation to the proof, we first state one result concerning local existence of classical solution to the problem (\ref{CSC}).
\begin{lem}\label{local existence}
Let $u_0$, $v_0$ and $w_0$ satisfy {\rm{(\ref{initial data})}}, and let $\chi_1, \chi_2, \alpha, \beta>0$. Then there exist $T_{\max}\leq\infty$ and a uniquely determined triple $(u,v, w)$ of nonnegative functions
\begin{eqnarray*}
&&u\in C^0(\bar{\Omega}\times[0, T_{\max}))\cap C^{2,1}(\bar{\Omega}\times(0, T_{\max})),\\
&&v\in C^0(\bar{\Omega}\times[0, T_{\max}))\cap C^{2,1}(\bar{\Omega}\times(0, T_{\max})),\\
&&w\in C^0(\bar{\Omega}\times[0, T_{\max}))\cap C^{2,1}(\bar{\Omega}\times(0, T_{\max}))\cap L_{loc}^{\infty}([0, T_{\max});W^{1,q}(\Omega)),
\end{eqnarray*}
which solves {\rm{(\ref{CSC})}} in the classical sense. If $T_{\max}<\infty$, then
 \begin{equation*}
\|u(t)\|_{L^{\infty}(\Omega)}+\|v(t)\|_{L^{\infty}(\Omega)}+\|w(t)\|_{W^{1,q}(\Omega)}\to\infty\quad\mbox{as}\ t\nearrow T_{\max}.
\end{equation*}
Furthermore, the solution $(u,v,w)$ satisfies
\begin{equation}\label{mc}
\|u(t)\|_{L^1{(\Omega)}}=\|u_0\|_{L^1{(\Omega)}}\quad \mbox{for all}\ t\in(0, T_{max})
\end{equation}
and
\begin{equation}\label{mcv}
\|v(t)\|_{L^1{(\Omega)}}=\|v_0\|_{L^1{(\Omega)}}\quad \mbox{for all}\ t\in(0, T_{max})
\end{equation}
as well as
\begin{equation}\label{bound w}
0\leq w\leq\|w_0\|_{L^{\infty}(\Omega)}\quad \mbox{in}\ \bar\Omega\times[0, T_{max}).
\end{equation}
\end{lem}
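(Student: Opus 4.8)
The plan is to prove Lemma~\ref{local existence} by a standard fixed-point argument for the local solvability, followed by elementary identities for the global structure. First I would reformulate \eqref{CSC} as an abstract parabolic system: fix $T>0$ small, put the unknowns in a Banach space such as $X_T:=\{(u,v,w)\in C^0(\bar\Omega\times[0,T])^2\times (C^0(\bar\Omega\times[0,T])\cap L^\infty((0,T);W^{1,q}(\Omega)))\}$ with the obvious norm, and define a map $\Phi$ on a closed ball of $X_T$ by solving, for a given $(\tilde u,\tilde v,\tilde w)$, the three \emph{linear} Neumann problems obtained by freezing the coupling: $u_t=\Delta u-\chi_1\nabla\cdot(u\nabla\tilde w)$, $v_t=\Delta v-\chi_2\nabla\cdot(v\nabla\tilde w)$, and $w_t=\Delta w-(\alpha\tilde u+\beta\tilde v)w$. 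Using the smoothing estimates for the Neumann heat semigroup $(e^{t\Delta})_{t\geq0}$ on $\Omega$ — in particular $\|e^{t\Delta}\nabla\cdot f\|_{L^\infty}\le C t^{-1/2-n/(2q)}\|f\|_{L^q}$ and $L^p$--$L^q$ decay — together with the embedding $W^{1,q}(\Omega)\hookrightarrow C^0(\bar\Omega)$ for $q>n$, one checks that $\Phi$ maps a small ball into itself and is a contraction for $T$ sufficiently small; Banach's fixed point theorem then yields a unique mild solution on $[0,T]$, and parabolic regularity (Schauder estimates, bootstrapping from the $W^{1,q}$-regularity of $w_0$) upgrades it to the asserted $C^{2,1}$ classical solution on $(0,T_{\max})$, where $T_{\max}$ is the maximal existence time obtained by continuation. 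The blow-up criterion is the usual contrapositive: if $\|u(t)\|_{L^\infty}+\|v(t)\|_{L^\infty}+\|w(t)\|_{W^{1,q}}$ stays bounded on $[0,T_{\max})$ with $T_{\max}<\infty$, the fixed-point scheme can be restarted uniformly past $T_{\max}$, a contradiction.

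Next I would establish nonnegativity and the structural properties \eqref{mc}--\eqref{bound w}. Nonnegativity of $u,v,w$ follows from the maximum principle applied to each equation (each is a linear parabolic equation in its own unknown once the others are viewed as coefficients, with homogeneous Neumann data and nonnegative initial data); for $w$ one also uses that $-(\alpha u+\beta v)\le0$ so that $w$ is a subsolution of the heat equation and a supersolution of $w_t=\Delta w-(\alpha\|u\|_\infty+\beta\|v\|_\infty)w$, giving $0\le w\le\|w_0\|_{L^\infty}$, which is \eqref{bound w}. The mass identities \eqref{mc} and \eqref{mcv} come from integrating the $u$- and $v$-equations over $\Omega$: the Laplacian and the divergence term both integrate to zero by the Neumann boundary condition, so $\frac{d}{dt}\int_\Omega u=0$ and likewise for $v$.

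The main obstacle is the local well-posedness itself, specifically handling the low regularity of $w_0\in W^{1,q}(\Omega)$ (rather than, say, $C^1$) in a way that still closes the contraction: the taxis terms $\nabla\cdot(u\nabla\tilde w)$ and $\nabla\cdot(v\nabla\tilde w)$ need $\nabla\tilde w$ controlled in $L^q$ uniformly in time, so one must track the $L^\infty((0,T);W^{1,q})$-norm of $w$ through the third equation, where the coefficient $\alpha\tilde u+\beta\tilde v$ is only continuous; this is exactly why the space $L^\infty_{loc}([0,T_{\max});W^{1,q}(\Omega))$ appears in the statement. The resolution is the standard one (cf. the local-existence arguments in \cite{Tao-JMAA-2011, Tao&Winkler-JDE-2012, Winkler-JDE-2010}): estimate $\|\nabla w(t)\|_{L^q}$ via the variation-of-constants formula using $\|\nabla e^{t\Delta}\varphi\|_{L^q}\le C\|\nabla\varphi\|_{L^q}$ and $\|\nabla e^{t\Delta}\varphi\|_{L^q}\le Ct^{-1/2}\|\varphi\|_{L^q}$, absorbing the zero-order term over a short time interval, and note that $w$ instantly regularizes for $t>0$ so that all parabolic bootstrapping for the classical regularity is carried out on $(t_0,T_{\max})$ with $t_0>0$. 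Since these arguments are by now routine and essentially identical to those already in the literature for the one-species consumption model, I would keep this part brief and refer the reader to \cite{Tao-JMAA-2011, Winkler-JDE-2010} for the details, presenting here only the adaptations forced by the presence of the second population density.
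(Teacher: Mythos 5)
Your proposal is correct and follows essentially the same route as the paper, which likewise disposes of the local existence, uniqueness, regularity and extensibility criterion by the standard contraction-mapping/parabolic-regularity argument (citing the same circle of references, e.g.\ \cite{Winkler-CPDE-2010,Tao-JMAA-2011}), obtains (\ref{mc})--(\ref{mcv}) by integrating the first two equations, and gets (\ref{bound w}) from the maximum principle. No substantive differences or gaps to report.
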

\begin{proof}
The local existence and regularity of the solution is based on standard contraction mapping arguments and parabolic regularity theory, which can be found in \cite [Lemma 1.1]{Winkler-CPDE-2010}(see also \cite [Lemma 2.1]{Tao-JMAA-2011}). Integrating the first and the second equations in (\ref{CSC}), we immediately obtain
\begin{equation*}
\frac{d}{dt}\int_{\Omega}u(x,t)dx=0\quad\mbox{and}\quad\frac{d}{dt}\int_{\Omega}v(x,t)dx=0
\end{equation*}
which yield (\ref{mc}) and (\ref{mcv}). The statement (\ref{bound w}) follows from an application of the maximum principle.
\end{proof}

The following lemma is a generalization of \cite[Lemma 3.2]{zhang-mn-2016}, which asserts that a bound for $\|u\|_{L^p(\Omega)}$ and $\|v\|_{L^p(\Omega)}$ with $p>\frac{n}{2}$ for all $t\in(0, T_{\max})$ can guarantee the global existence and boundedness of classical solutions to (\ref{CSC}).
\begin{lem}\label{improved extensibility}
Suppose that the initial data $u_0$, $v_0$ and $w_0$ satisfy {\rm{(\ref{initial data})}} and the parameters $\chi_1, \chi_2, \alpha, \beta>0$. Let $p\geq1$. If the first and second components of solution satisfy
\begin{equation}\label{up assumption}
\sup_{t\in(0, T_{\max})}\left(\|u(t)\|_{L^p(\Omega)}+\|v(t)\|_{L^p(\Omega)}\right)<\infty,
\end{equation}
for some $p>\frac{n}{2}$, then the solution of {\rm{(\ref{CSC})}} is global in time. Moreover, the solution fulfills
\begin{equation*}
\sup_{t\in(0, \infty)}\left(\|u(t)\|_{L^{\infty}(\Omega)}+\|v(t)\|_{L^{\infty}(\Omega)}+\|w(t)\|_{W^{1,q}(\Omega)}\right)<\infty.
\end{equation*}
\end{lem}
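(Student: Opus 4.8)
The plan is to bootstrap from the assumed $L^p$-bound on $u$ and $v$ with $p>\frac n2$ all the way to $L^\infty$-bounds, and then invoke the extensibility criterion of Lemma \ref{local existence}. The natural tool throughout is the variation-of-constants representation for $u$ (and symmetrically for $v$),
\begin{equation*}
u(\cdot,t)=e^{t\Delta}u_0-\chi_1\int_0^t e^{(t-s)\Delta}\nabla\cdot\bigl(u(\cdot,s)\nabla w(\cdot,s)\bigr)\,ds,
\end{equation*}
together with standard $L^p$--$L^q$ smoothing estimates for the Neumann heat semigroup $(e^{t\Delta})_{t\ge0}$ on $\Omega$, in particular the gradient estimate $\|\nabla e^{t\Delta}\phi\|_{L^q}\le C(1+t^{-1/2-\frac n2(1/r-1/q)})e^{-\lambda_1 t}\|\phi\|_{L^r}$.

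First I would control $\nabla w$. From the third equation, $w$ solves $w_t=\Delta w-(\alpha u+\beta v)w$ with $0\le w\le\|w_0\|_{L^\infty}$ by \eqref{bound w}; writing $\nabla w(\cdot,t)=\nabla e^{t\Delta}w_0-\int_0^t\nabla e^{(t-s)\Delta}\bigl((\alpha u+\beta v)w\bigr)(\cdot,s)\,ds$ and using that $(\alpha u+\beta v)w$ lies in $L^p$ uniformly in time (product of the assumed $L^p$-bound with the $L^\infty$-bound on $w$), the semigroup estimate gives a uniform bound for $\|\nabla w(\cdot,t)\|_{L^{q_1}}$ for some $q_1>n$; here the exponent $\frac n2\cdot\frac1p<1$ condition ($p>\frac n2$) is exactly what makes the singular factor $(t-s)^{-1/2-\frac n2(1/p-1/q_1)}$ integrable for a suitable $q_1>n$. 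Once $\nabla w\in L^\infty((0,T_{\max});L^{q_1}(\Omega))$ with $q_1>n$, the term $u\nabla w$ appearing in the Duhamel formula for $u$ can be estimated in $L^{r}$ with $\frac1r=\frac1{p_0}+\frac1{q_1}$ whenever $\|u\|_{L^{p_0}}$ is controlled, and one runs a standard Moser-type iteration: starting from $p_0=p$, each step upgrades the exponent, and because $\frac n{q_1}<1$ the iteration reaches $L^\infty$ in finitely many steps (or via the usual recursive inequality $M_{k}\le C^{k}(1+\sup_k M_k)$ argument). This yields $\sup_{t<T_{\max}}(\|u(t)\|_{L^\infty}+\|v(t)\|_{L^\infty})<\infty$.

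With $u,v\in L^\infty$ uniformly, the source term $(\alpha u+\beta v)w$ in the $w$-equation is bounded, so parabolic regularity (applied to $\nabla w$ via the semigroup once more, now with the source in $L^\infty\subset L^\infty((0,T_{\max});L^q(\Omega))$) gives $\sup_{t<T_{\max}}\|w(t)\|_{W^{1,q}}<\infty$. Hence $\|u(t)\|_{L^\infty}+\|v(t)\|_{L^\infty}+\|w(t)\|_{W^{1,q}}$ stays bounded as $t\nearrow T_{\max}$, which by the blow-up criterion in Lemma \ref{local existence} forces $T_{\max}=\infty$, and the same bounds then hold on $(0,\infty)$, proving both assertions. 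Since the statement says this is a generalization of \cite[Lemma 3.2]{zhang-mn-2016}, I would follow that reference's scheme almost verbatim, the only new point being to carry the second population $v$ through every estimate in parallel with $u$. The main technical obstacle is the $\nabla w$ estimate: one must check that $p>\frac n2$ genuinely permits a choice of $q_1>n$ with all Hölder exponents admissible and the time singularities integrable; everything downstream is routine semigroup bookkeeping and iteration.
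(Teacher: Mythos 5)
Your proposal is correct in substance and shares its first half with the paper: both bound $\nabla w$ by Duhamel's formula for the third equation, using \eqref{bound w}, the hypothesis \eqref{up assumption}, and the Neumann heat-semigroup gradient estimate, with $p>\frac n2$ ensuring the time singularity $(t-s)^{-\frac12-\frac n2(\frac1p-\frac1{q_1})}$ is integrable for some $q_1>n$ (the paper takes $q_1=kp_0$ with $n<kp_0<\frac{np}{(n-p)_+}$ and $kp_0\le q$). Where you diverge is the passage from the $\nabla w$-bound to $L^\infty$-bounds on $u$ and $v$: you propose a finite-step bootstrap up the Lebesgue scale (each Duhamel step trading $u\nabla w\in L^r$, $\frac1r=\frac1{p_k}+\frac1{q_1}$, for a higher exponent $p_{k+1}$, reaching $L^\infty$ after finitely many steps since $\frac n{q_1}<1$), whereas the paper closes in a single step: it estimates $\|u\nabla w\|_{L^{p_0}}\le\|u\|_{L^{k'p_0}}\|\nabla w\|_{L^{kp_0}}$, interpolates $\|u\|_{L^{k'p_0}}\le\|u\|_{L^\infty}^{r}\|u\|_{L^1}^{1-r}$ with $r=1-\frac1{k'p_0}<1$ using the mass conservation \eqref{mc}, and absorbs $\sup_t\|u\|_{L^\infty}^{r}$ into the left-hand side. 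The paper's trick is shorter and needs no iteration, at the price of invoking the $L^1$-conservation; your iteration avoids \eqref{mc} and is more robust, but as written it is only sketched — the displayed ``recursive inequality $M_k\le C^k(1+\sup_k M_k)$'' is circular as stated, so you should either carry out the finite-step exponent bookkeeping explicitly or replace it by the paper's interpolation-absorption argument. On one point you are actually more careful than the paper: you note that, once $u,v$ are bounded, one more semigroup application gives the uniform $W^{1,q}$-bound on $w$ needed to contradict (or rather to rule out, via the extensibility criterion of Lemma \ref{local existence}) finiteness of $T_{\max}$; the paper leaves this last step implicit. Following the scheme of \cite{zhang-mn-2016} with $v$ carried in parallel, as you suggest, is indeed exactly what the paper does.
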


\begin{proof}
Since our assumption $q>n$ and for each fixed $p>\frac{n}{2}$ there holds
\begin{eqnarray*}
  \frac{np}{(n-p)_{+}}=\left\{\begin{array}{lll}
     \medskip
     \infty,&{} \mbox{if}\ p\geq n,\\
     \medskip
      \frac{np}{n-p}>n,&{} \mbox{if}\ \frac{n}{2}<p<n,
  \end{array}\right.
\end{eqnarray*}
it is possible to find $1<p_0<q$ fulfilling
\begin{equation}\label{p1}
n<p_0<\frac{np}{(n-p)_{+}},
\end{equation}
which enables us to choose $k>1$ such that
\begin{equation}\label{p2}
n<kp_0<\frac{np}{(n-p)_{+}}\quad\mbox{and}\quad kp_0\leq q.
\end{equation}
We shall argue by contradiction. Assume that $T_{\max}<\infty$.  Applying the variation-of-constants formula
\begin{equation*}
w(\cdot,t)=e^{t\Delta}w_0-\int_0^te^{(t-s)\Delta}(\alpha u(\cdot,s)+\beta v(\cdot,s))w(\cdot,s)ds,
\end{equation*}
we get
\begin{equation*}
\|\nabla w(\cdot,t)\|_{L^{kp_0}(\Omega)}\leq\left\|\nabla e^{t\Delta}w_0\right\|_{L^{kp_0}(\Omega)}+\int_0^t
    \left\|\nabla e^{(t-s)\Delta}(\alpha u(\cdot,s)+\beta v(\cdot,s))w(\cdot,s)\right\|_{L^{kp_0}(\Omega)}ds
\end{equation*}
for all $t\in(0, T_{\max})$. By standard smoothing estimates for the Neumann heat semigroup \cite[Lemma 1.3]{Winkler-JDE-2010}, (\ref{bound w}) and $kp_0\leq q$ due to (\ref{p2}), we obtain positive constants $C_1$ and $C_2$ such that
\begin{eqnarray*}
\|\nabla w(\cdot,t)\|_{L^{kp_0}(\Omega)}&\leq&C_1\left\|w_0\right\|_{W^{1,q}(\Omega)}+C_2\int_0^t\left(1+(t-s)^{-\frac{1}{2}-\frac{n}{2}(\frac{1}{p}-\frac{1}{k p_0})}\right)e^{-\lambda_1(t-s)}\\
                                              &\quad&\times\left\|(\alpha u(\cdot,s)+\beta v(\cdot,s))w(\cdot,s)\right\|_{L^p(\Omega)}ds\\
                                        &\leq&C_1\left\|w_0\right\|_{W^{1,q}(\Omega)}+C_2\|w_0\|_{L^{\infty}(\Omega)}\int_0^t\left(1+(t-s)^{-\frac{1}{2}-\frac{n}{2}(\frac{1}{p}-\frac{1}{k p_0})}\right)e^{-\lambda_1(t-s)}\\
                                              &\quad&\times\left(\|u(\cdot,s)\|_{L^p(\Omega)}+\|v(\cdot,s)\|_{L^p(\Omega)}\right)ds
\end{eqnarray*}
for all $t\in(0, T_{\max})$. Here and below, $\lambda_1>0$ denotes the first nonzero eigenvalue of $-\Delta$ in $\Omega$ under homogeneous Neumann boundary conditions. Since $\frac{1}{2}+\frac{n}{2}(\frac{1}{p}-\frac{1}{k p_0})<1$ by the right-hand side of the first inequality in (\ref{p2}) and (\ref{up assumption}), we can take constants $C_3>0$ and $C_4>0$ fulfilling
\begin{eqnarray}\label{nabla w}
\|\nabla w(\cdot,t)\|_{L^{kp_0}(\Omega)}&\leq&C_1\left\|w_0\right\|_{W^{1,q}(\Omega)}+C_3\left(\sup_{t\in(0, T_{\max})}\left(\|u(t)\|_{L^p(\Omega)}+\|v(t)\|_{L^p(\Omega)}\right)\right)\nonumber\\
                                        &\leq&C_4\quad \mbox{for all}\ t\in(0, T_{max}).
\end{eqnarray}
Next by the variation-of-constants formula
\begin{equation*}
u(\cdot,t)=e^{t\Delta}u_0-\chi_1\int_0^t\nabla e^{(t-s)\Delta}u(\cdot,s)\nabla w(\cdot,s)ds,
\end{equation*}
we have
\begin{eqnarray*}
\|u(\cdot,t)\|_{L^{\infty}(\Omega)}&\leq&\left\|e^{t\Delta}u_0\right\|_{L^{\infty}(\Omega)}+\chi_1\int_0^t\big\|\nabla e^{(t-s)\Delta}u(\cdot,s)\nabla w(\cdot,s)\big\|_{L^{\infty}(\Omega)}ds
\end{eqnarray*}
for all $t\in(0, T_{\max})$. In view of the maximum principle and smoothing estimates for the Neumann heat semigroup \cite[Lemma 1.3]{Winkler-JDE-2010}, we obtain $C_5>0$ satisfying
\begin{eqnarray}\label{u finity1}
\|u(\cdot,t)\|_{L^{\infty}(\Omega)}\leq\left\|u_0\right\|_{L^{\infty}(\Omega)}+C_5\int_0^t\left(1+(t-s)^{-\frac{1}{2}-\frac{n}{2 p_0}}\right)e^{-\lambda_1(t-s)}\big\|u(\cdot,s)\nabla w(\cdot,s)\big\|_{L^{p_0}(\Omega)}ds
\end{eqnarray}
for all $t\in(0, T_{\max})$. Here by the H\"{o}lder inequality, interpolation inequality, (\ref{mc}) and (\ref{nabla w}), we can find find $C_6>0$ such that
\begin{eqnarray*}
\big\|u(\cdot,s)\nabla w(\cdot,s)\big\|_{L^{p_0}(\Omega)}&\leq&\big\|u(\cdot,s)\big\|_{L^{k'p_{0}}(\Omega)}\big\|\nabla w(\cdot,s)\big\|_{L^{k p_0}(\Omega)}\nonumber\\
&\leq&\big\|u(\cdot,s)\big\|^{r}_{L^{\infty}(\Omega)}\big\|u(\cdot,s)\big\|^{1-r}_{L^{1}(\Omega)}\big\|\nabla w(\cdot,s)\big\|_{L^{k p_0}(\Omega)}\nonumber\\
&\leq&C_6\big\|u(\cdot,s)\big\|^{r}_{L^{\infty}(\Omega)}\quad\mbox{for all}\ s\in(0, T_{\max}),
\end{eqnarray*}
where $k'$ is the dual exponent of $k$ and $r=1-\frac{1}{k'p_0}\in(0,1)$. Inserting this into (\ref{u finity1}), it follows that
\begin{eqnarray*}
\sup_{t\in(0,T)}\|u(\cdot,t)\|_{L^{\infty}(\Omega)}\leq\left\|u_0\right\|_{L^{\infty}(\Omega)}+
C_7\sup_{t\in(0,T)}\|u(\cdot,t)\|^{r}_{L^{\infty}(\Omega)}\quad\mbox{for all}\ T\in(0, T_{\max})
\end{eqnarray*}
with
\begin{equation*}
C_7=C_5C_6\int_0^{\infty}\left(1+\sigma^{-\frac{1}{2}-\frac{n}{2 p_0}}\right)e^{-\lambda_1\sigma}d\sigma
\end{equation*}
is finite thanks to the left-hand side of (\ref{p1}). Therefore, there exists a constant $C_8>0$ such that
\begin{eqnarray*}
\|u(\cdot,t)\|_{L^{\infty}(\Omega)}\leq C_8\quad\mbox{for all}\ t\in(0, T_{\max}).
\end{eqnarray*}
Arguing similarly as above, we see that
\begin{eqnarray*}
\|v(\cdot,t)\|_{L^{\infty}(\Omega)}\leq C_{9}\quad\mbox{for all}\ t\in(0, T_{\max}).
\end{eqnarray*}
with some $C_{9}>0$. This is a contradiction to Lemma \ref{local existence}. Hence we complete the proof.
\end{proof}

For the proof of the main result we also need the following technical lemma which provides the desired weight function.
\begin{lem}\label{weight function}
Let $\varepsilon\in(0,1)$ and $p>1$. Define the function
\begin{equation*}
\varphi(s):=e^{z(s)},\quad0\leq s\leq M,
\end{equation*}
where
\begin{equation*}
z(s):=-\frac{b}{2c}s+\frac{\sqrt{4ac-b^2}}{2c}\int_0^s\tan\left(\frac{\sqrt{4ac-b^2}}{2d}\tau+\arctan\frac{b}{\sqrt{4ac-b^2}}\right)d\tau
\end{equation*}
with
$a=(p-1)^2,\ b=-4(p-1)\varepsilon,\ c=\frac{4}{p}(1+(p-1)\varepsilon)$ and $d=\frac{4}{p}(p-1)(1-\varepsilon)$. If there holds
\begin{equation}\label{condition}
M<\frac{2}{\sqrt{p}}\sqrt{\frac{1-\varepsilon}{1+p\varepsilon}}\left(\frac{\pi}{2}+\arctan\sqrt{\frac{p}{1+(p-1)\varepsilon-p\varepsilon^2}}\varepsilon\right),
\end{equation}
then the function $\varphi(s)$ is well defined and satisfies the following conditions
\begin{equation}\label{con1}
\varphi'(s)\geq0,
\end{equation}
\begin{equation}\label{con2}
1\leq\varphi(s)\leq\varphi(M),
\end{equation}
\begin{equation}\label{con3}
\frac{1}{p}\varphi''(s)-\varphi'(s)\geq0
\end{equation}
and
\begin{equation}\label{con4}
|(p-1)\varphi(s)-2\varphi'(s)|-2\sqrt{(p-1)(1-\varepsilon)\varphi(s)\left(\frac{1}{p}\varphi''(s)-\varphi'(s)\right)}=0
\end{equation}
for all $0\leq s\leq M$.
\end{lem}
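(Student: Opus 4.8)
The plan is to treat $z(s)$ as the solution of a second‑order ODE initial value problem and to read off the four conditions from the structure of that ODE. First I would observe that the integrand in the definition of $z$ suggests setting
\[
g(s):=\frac{\sqrt{4ac-b^2}}{2d}s+\arctan\frac{b}{\sqrt{4ac-b^2}},
\]
so that $z'(s)=-\frac{b}{2c}+\frac{\sqrt{4ac-b^2}}{2c}\tan g(s)$. A direct differentiation, using $(\tan g)'=g'(1+\tan^2 g)$ and the identities relating $a,b,c,d$, should show that $y:=z'$ solves a Riccati‑type equation of the form
\[
c\,y' = a + b\,y + c\,y^2 \qquad\text{(more precisely, a quadratic in }y\text{ with these coefficients up to the }d\text{-rescaling)},
\]
with the initial condition $y(0)=z'(0)=-\frac{b}{2c}+\frac{\sqrt{4ac-b^2}}{2c}\cdot\frac{b}{\sqrt{4ac-b^2}}=0$, i.e.\ $z'(0)=0$. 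Equivalently, writing everything in terms of $\varphi=e^{z}$ (so $\varphi'=z'\varphi$, $\varphi''=(z''+(z')^2)\varphi$), this Riccati equation becomes a \emph{linear} relation among $\varphi$, $\varphi'$, $\varphi''$; matching coefficients, one expects precisely
\begin{equation*}
\frac{4}{p}\,\varphi'' - 4\varphi' + (p-1)\varphi = \text{(something)} \quad\text{or an analogous identity that forces (\ref{con4})}.
\end{equation*}
The key algebraic fact to verify is that with the stated $a,b,c,d$ the discriminant‑type quantity $4ac-b^2 = \frac{16}{p}(p-1)^2\big(1+(p-1)\varepsilon-p\varepsilon^2\big)$ is positive (which it is, since $1+(p-1)\varepsilon - p\varepsilon^2 = (1-\varepsilon)(1+p\varepsilon)>0$ for $\varepsilon\in(0,1)$, $p>1$), so that $\tan$ has a real argument and the square roots make sense.

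Second, I would establish the domain of definition, i.e.\ that $\varphi$ is well defined on $[0,M]$. The function $\tan g(s)$ blows up when $g(s)=\pi/2$, that is at $s^\ast$ with
\[
\frac{\sqrt{4ac-b^2}}{2d}\,s^\ast = \frac{\pi}{2}-\arctan\frac{b}{\sqrt{4ac-b^2}}.
\]
Plugging in the values of $b,d$ and $4ac-b^2$ and simplifying, one finds that $s^\ast$ equals exactly the right‑hand side of (\ref{condition}) — here one uses $\arctan(-x)=-\arctan x$ and the explicit form $\tfrac{2d}{\sqrt{4ac-b^2}} = \tfrac{2}{\sqrt p}\sqrt{\tfrac{1-\varepsilon}{1+p\varepsilon}}$ and $\tfrac{b}{\sqrt{4ac-b^2}} = -\sqrt{\tfrac{p}{1+(p-1)\varepsilon-p\varepsilon^2}}\,\varepsilon$. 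Hence the hypothesis $M<s^\ast$ is precisely what guarantees that $g(s)\in\big(\arctan\tfrac{b}{\sqrt{4ac-b^2}},\tfrac\pi2\big)$ for $s\in[0,M]$, so $\tan g$ is finite and continuous there, $z$ and $\varphi=e^z$ are well defined, and all the subsequent computations are legitimate.

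Third, with the ODE and the domain in hand, the four conclusions follow. For (\ref{con1}): since $z'(0)=0$ and $g(s)$ increases from $\arctan\tfrac{b}{\sqrt{4ac-b^2}}$ (a negative number, as $b<0$), we have $\tan g(s)\ge \tan\big(\arctan\tfrac{b}{\sqrt{4ac-b^2}}\big)=\tfrac{b}{\sqrt{4ac-b^2}}$ on $[0,M]$, which after rearrangement gives $z'(s)=\varphi'/\varphi\ge 0$, hence $\varphi'\ge0$. For (\ref{con2}): monotonicity of $\varphi$ together with $\varphi(0)=e^{z(0)}=1$ yields $1\le\varphi(s)\le\varphi(M)$. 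For (\ref{con3}): from the Riccati equation one solves for $z''+(z')^2 = \tfrac{p}{4}\big((p-1)^2 \cdot(\text{coefficient})\big)$-type expression and checks $\tfrac1p\varphi''-\varphi' = \varphi\big(\tfrac1p(z''+(z')^2)-z'\big)\ge0$; this should reduce to a manifestly nonnegative quadratic in $\tan g(s)$, or even to an exact square, on the relevant range. Finally (\ref{con4}) is the quadratic relation itself: writing $X:=\varphi'/\varphi = z'$ and $Y:=\tfrac1p\varphi''/\varphi-\varphi'/\varphi = \tfrac1p(z''+(z')^2)-z'$, the ODE is engineered so that $\big((p-1)-2X\big)^2 = 4(p-1)(1-\varepsilon)Y$ identically; taking square roots (the sign being controlled by whether $(p-1)-2X\gtrless0$, which is why the absolute value appears) gives (\ref{con4}) after multiplying through by $\varphi$.

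The main obstacle is purely the bookkeeping in the second and third steps: one must carry the four parameters $a,b,c,d$ through the differentiation of $\tan g$ and verify that the resulting quadratic relation among $\varphi,\varphi',\varphi''$ is \emph{exactly} the perfect‑square identity (\ref{con4}) — equivalently, that $b^2=4(p-1)(1-\varepsilon)\cdot\big(a - \text{(the }\varphi''\text{-coefficient contribution)}\big)$ holds with these specific values. This is where the particular choices $c=\tfrac4p(1+(p-1)\varepsilon)$, $d=\tfrac4p(p-1)(1-\varepsilon)$ are forced, and getting the constants to line up is the only real work; everything else (positivity of $4ac-b^2$, the value of $s^\ast$, and the sign of $z'$) is then a short computation.
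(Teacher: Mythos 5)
Your proposal follows essentially the same route as the paper: check $4ac-b^2>0$, note that hypothesis (\ref{condition}) is exactly the threshold keeping the argument of $\tan$ strictly below $\tfrac{\pi}{2}$ (so $\varphi$ is well defined), get $\varphi'\ge 0$ from $z'(0)=0$ and monotonicity of $\tan$, and use the Riccati identity $z''=\tfrac1d\bigl(a+bz'+c(z')^2\bigr)$ to express $\tfrac1p\varphi''-\varphi'$ as $\tfrac{1}{4(p-1)(1-\varepsilon)}\varphi\bigl((p-1)-2z'\bigr)^2$, which yields (\ref{con3}) and (\ref{con4}) simultaneously. The coefficient bookkeeping you deferred does close exactly as you predict (one finds $\tfrac cd+1=\tfrac{p}{(p-1)(1-\varepsilon)}$, $\tfrac bd-p=-\tfrac{p}{1-\varepsilon}$, $\tfrac ad=\tfrac{p(p-1)}{4(1-\varepsilon)}$), so the argument is correct and matches the paper's proof.
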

\begin{proof}
First of all we remark that for any $\varepsilon\in(0,1)$
\begin{equation*}
4ac-b^2=\frac{16(p-1)^2}{p}\left(1+(p-1)\varepsilon-p\varepsilon^2\right)>0.
\end{equation*}
Owing to (\ref{condition}), for $0\leq s\leq M$ we get
\begin{eqnarray*}
&&\frac{\sqrt{4ac-b^2}}{2d}s+\arctan\frac{b}{\sqrt{4ac-b^2}}\geq-\arctan\sqrt{\frac{p}{1+(p-1)\varepsilon-p\varepsilon^2}}\varepsilon>-\frac{\pi}{2}
\end{eqnarray*}
and
\begin{eqnarray*}
&&\frac{\sqrt{4ac-b^2}}{2d}s+\arctan\frac{b}{\sqrt{4ac-b^2}}\\
&&\leq\frac{\sqrt{p}}{2}\sqrt{\frac{1+p\varepsilon}{1-\varepsilon}}M-\arctan\sqrt{\frac{p}{1+(p-1)\varepsilon-p\varepsilon^2}}\varepsilon\\
&&<\frac{\sqrt{p}}{2}\sqrt{\frac{1+p\varepsilon}{1-\varepsilon}}\left(\frac{2}{\sqrt{p}}\sqrt{\frac{1-\varepsilon}{1+p\varepsilon}}\left(\frac{\pi}{2}
   +\arctan\sqrt{\frac{p}{1+(p-1)\varepsilon-p\varepsilon^2}}\varepsilon\right)\right)\\
&&\quad-\arctan\sqrt{\frac{p}{1+(p-1)\varepsilon-p\varepsilon^2}}\varepsilon\\
&&=\frac{\pi}{2}.
\end{eqnarray*}
Therefore, the function $\varphi(s)$ is well defined for $0\leq s\leq M$. A direct computation reveals that
\begin{eqnarray*}
\varphi'(s)&=&\varphi(s)z'(s)\\
           &=&\varphi(s)\left(-\frac{b}{2c}+\frac{\sqrt{4ac-b^2}}{2c}\left(\tan\left(\frac{\sqrt{4ac-b^2}}{2d}s+\arctan\frac{b}{\sqrt{4ac-b^2}}\right)\right)\right)\\
           &\geq&\varphi(s)z'(0)\\
           &=&0\quad\mbox{for all}\ 0\leq s\leq M.
\end{eqnarray*}
Then the relation (\ref{con2}) is obvious according to (\ref{con1}). Since $\varphi''(s)=\varphi(s)\left(z''(s)+(z'(s))^2\right)$ and
\begin{eqnarray*}
z''(s)&=&\frac{4ac-b^2}{4cd}\left(1+\tan^2\left(\frac{\sqrt{4ac-b^2}}{2d}s+\arctan\frac{b}{\sqrt{4ac-b^2}}\right)\right)\\
      &=&\frac{4ac-b^2}{4cd}\left(1+\frac{4c^2}{4ac-b^2}\left(z'(s)+\frac{b}{2c}\right)^2\right)\\
      &=&\frac{1}{d}\left(a+bz'(s)+c(z'(s))^2\right),
\end{eqnarray*}
we have
\begin{eqnarray*}
\frac{1}{p}\varphi''(s)-\varphi'(s)&=&\left(\frac{1}{p}\left((z''(s)+(z'(s))^2\right)-z'(s)\right)\varphi(s)\\
                                   &=&\frac{1}{p}\left(\left(\frac{c}{d}+1\right)(z'(s))^2+\left(\frac{b}{d}-p\right)z'(s)+\frac{a}{d}\right)\varphi(s)\\
                                   &=&\frac{1}{4(p-1)(1-\varepsilon)}\varphi(s)\big((p-1)-2z'(s)\big)^2\\
                                   &=&\frac{1}{4(p-1)(1-\varepsilon)}\frac{1}{\varphi(s)}\big((p-1)\varphi(s)-2\varphi'(s)\big)^2,
\end{eqnarray*}
which proves (\ref{con3}) and (\ref{con4}).
\end{proof}

\section{Global existence. Proof of Theorem \ref{main result1}}
With the preliminaries at hand, we are now prepared to prove global existence and boundedness in (\ref{CSC}). In the following lemma we establish $L^p$-bounds for $u$. The method of the proof is a modification of an idea in \cite{Winkler-MN-2010} (see also, e.g. \cite{Baghaei-mmas-2017,Tao-JMAA-2011,zhang-mn-2016}).
\begin{lem}\label{u lp}
Let $p>1$ and $\varepsilon\in(0,1)$. Assume that the initial functions $u_0$, $v_0$ and $w_0$ fulfill {\rm{(\ref{initial data})}}. If the assumption {\rm{(\ref{condition})}} holds with $M=\max\{\chi_1, \chi_2\}\|w_0\|_{L^{\infty}(\Omega)}$, then there exists $C(p, \varepsilon)>0$ such that for each $\varepsilon\in(0,1)$ the first and second components of the solution for the system {\rm{(\ref{CSC})}} satisfy
\begin{equation}\label{up}
\|u(\cdot, t)\|_{L^{p}(\Omega)}\leq C(p, \varepsilon)\quad\mbox{for all}\ t\in(0, T_{\max})
\end{equation}
and
\begin{equation}\label{vp}
\|v(\cdot, t)\|_{L^{p}(\Omega)}\leq C(p, \varepsilon)\quad\mbox{for all}\ t\in(0, T_{\max}).
\end{equation}
\end{lem}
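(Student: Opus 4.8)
The plan is to test the first equation against a weighted power of $u$, where the weight is the function $\varphi$ provided by Lemma \ref{weight function} evaluated at $\chi_1 w$ (or $\chi_2 w$ for the $v$-component), and to absorb the cross-diffusion terms by exploiting the consumption structure of the $w$-equation together with the algebraic identities \eqref{con1}--\eqref{con4}. Concretely, I would set $M=\max\{\chi_1,\chi_2\}\|w_0\|_{L^\infty(\Omega)}$, note that by \eqref{bound w} we have $0\le \chi_i w\le M$, and introduce
\[
y(t):=\int_\Omega u^p\varphi(\chi_1 w)\,dx,
\]
differentiate in time, and substitute $u_t=\Delta u-\chi_1\nabla\cdot(u\nabla w)$ and $w_t=\Delta w-(\alpha u+\beta v)w$.

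The key step is the differential inequality. After integrating by parts (using the Neumann conditions so no boundary terms appear), the evolution of $y(t)$ produces a leading dissipation term of the form $-p(p-1)\int u^{p-2}\varphi(\chi_1 w)|\nabla u|^2$, several mixed terms involving $\nabla u\cdot\nabla w$ with coefficients built from $\varphi'$ and $\varphi''$, a term $-\chi_1^2\int u^p\varphi''(\chi_1 w)|\nabla w|^2$-type contribution, and a nonpositive contribution coming from $w_t$'s consumption term (since $\varphi'\ge 0$ by \eqref{con1} and $(\alpha u+\beta v)w\ge 0$, the factor $-\chi_1\int u^p\varphi'(\chi_1 w)(\alpha u+\beta v)w\le 0$, which only helps). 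Viewing the remaining gradient terms as a quadratic form in $(\nabla u,\nabla w)$, its pointwise nonnegativity reduces to the discriminant condition, which is exactly \eqref{con3} and \eqref{con4}; these were engineered precisely so that the quadratic form is a perfect square (up to the $\varepsilon$-slack in \eqref{con4}) and hence the bad terms are dominated by the good dissipation. This yields an inequality of the type $y'(t)\le -\delta\int_\Omega|\nabla u^{p/2}|^2\varphi + (\text{lower order})$ — or more robustly $y'(t)+y(t)\le C$ after applying an Gagliardo--Nirenberg/Poincaré estimate to control $\int u^p\varphi$ by the gradient term plus a constant depending on the conserved mass $\|u_0\|_{L^1}$ from \eqref{mc}. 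Integrating gives $y(t)\le\max\{y(0),C\}$, and since $1\le\varphi\le\varphi(M)$ by \eqref{con2}, this translates into the uniform bound \eqref{up}.

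The main obstacle I anticipate is bookkeeping the numerous terms generated by differentiating $\int u^p\varphi(\chi_1 w)$ and correctly identifying the quadratic form in $\nabla u$ and $\nabla w$ so that conditions \eqref{con3}--\eqref{con4} apply verbatim; a subtle point is that the $\varepsilon$ appearing in Lemma \ref{weight function} must match the $\varepsilon$ used here so that \eqref{con4} leaves enough residual positivity — equivalently, one needs the discriminant of the quadratic form to be strictly negative for $\varepsilon\in(0,1)$, with the limiting case $\varepsilon\to 0$ corresponding to the sharp threshold $M<\sqrt{2/n}\,\pi$ once $p$ is chosen close to the relevant exponent. Once the pointwise algebra is settled, the rest is a routine ODE comparison.

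The same argument applied to the second equation with weight $\varphi(\chi_2 w)$ and the mass identity \eqref{mcv} gives \eqref{vp}. To handle both components simultaneously it is convenient to use the single weight $\varphi$ with $M=\max\{\chi_1,\chi_2\}\|w_0\|_{L^\infty(\Omega)}$ so that $0\le\chi_i w\le M$ for $i=1,2$, and note that the monotonicity \eqref{con1} together with $\varphi''/p-\varphi'\ge 0$ from \eqref{con3} continue to give the desired sign in each computation; the constant $C(p,\varepsilon)$ can then be taken as the maximum of the two bounds. This completes the scheme.
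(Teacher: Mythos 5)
Your proposal follows essentially the same route as the paper: test with the weighted functional $\int_\Omega u^p\varphi(\chi_i w)$, use $\varphi'\ge 0$ to discard the consumption term, reserve an $\varepsilon$-fraction of the dissipation to absorb $\int_\Omega u^p\varphi$ via Gagliardo--Nirenberg and mass conservation, and let \eqref{con3}--\eqref{con4} turn the remaining gradient terms into a (nonpositive) perfect square, after which the ODE comparison and $1\le\varphi\le\varphi(M)$ give the bound. The only cosmetic difference is that \eqref{con4} makes the relevant discriminant exactly zero rather than strictly negative, which is precisely how the paper uses it, so your scheme is correct.
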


\begin{proof}
Let $\varphi(s)$ be the function defined in Lemma \ref{weight function} and set $w_1:=\chi_1w$. Using the first equation and the third equation in (\ref{CSC}), we integrate by parts to obtain
 \begin{eqnarray*}
   \frac{1}{p}\frac{d}{dt}\int_{\Omega}u^p\varphi(w_1)&=&\int_{\Omega}u^{p-1}\varphi(w_1)u_t+\frac{1}{p}\int_{\Omega}u^p\varphi'(w_1)(w_1)_{t}\\
    &=&\int_{\Omega}u^{p-1}\varphi(w_1)(\Delta u-\nabla\cdot(u\nabla w_1))+\frac{1}{p}\int_{\Omega}u^p\varphi'(w_1)(\Delta w_1-(\alpha u+\beta v)w_1)\\
    &=&-(p-1)\int_{\Omega}u^{p-2}\varphi(w_1)|\nabla u|^2-\int_{\Omega}u^{p-1}\varphi'(w_1)\nabla u\cdot\nabla w_1\\
     &\quad&+(p-1)\int_{\Omega}u^{p-1}\varphi(w_1)\nabla u\cdot\nabla w_1+\int_{\Omega}u^{p}\varphi'(w_1)|\nabla w_1|^2\\
     &\quad&-\int_{\Omega}u^{p-1}\varphi'(w_1)\nabla u\cdot\nabla w_1-\frac{1}{p}\int_{\Omega}u^p\varphi''(w_1)|\nabla w_1|^2\\
     &\quad&-\frac{1}{p}\int_{\Omega}u^{p}\varphi'(w_1)(\alpha u+\beta v)w_1
 \end{eqnarray*}
for all $t\in(0, T_{\max})$. Fix $\varepsilon\in(0,1)$. Due to (\ref{con1}), it follows that
 \begin{eqnarray}\label{lp1}
   &&\frac{1}{p}\frac{d}{dt}\int_{\Omega}u^p\varphi(w_1)+(p-1)\varepsilon\int_{\Omega}u^{p-2}\varphi(w_1)|\nabla{u}|^2\nonumber\\
   &&\leq-(p-1)(1-\varepsilon)\int_{\Omega}u^{p-2}\varphi(w_1)|\nabla{u}|^2+\int_{\Omega}|(p-1)\varphi(w_1)-2\varphi'(w_1)|u^{p-1}|\nabla u||\nabla w_1|\nonumber\\
   &&\quad-\int_{\Omega}\left(\frac{1}{p}\varphi''(w_1)-\varphi'(w_1)\right)u^p|\nabla w_1|^2\quad\mbox{for all}\ t\in(0, T_{\max,\varepsilon}).
 \end{eqnarray}
Applying (\ref{mc}), (\ref{con2}), the Gagliardo-Nirenberg inequality and Young's inequality, we can find $C_1>0$ and $C_2>0$ such that
\begin{eqnarray*}
\int_{\Omega}u^p\varphi(w_1)&\leq&\varphi\left(\chi_1\|w_0\|_{L^{\infty}(\Omega)}\right)\left\|u^{\frac{p}{2}}\right\|^2_{L^2(\Omega)}\\
&\leq&C_1\Big(\left\|\nabla u^{\frac{p}{2}}\right\|^{2a}_{L^2(\Omega)}\left\|u^{\frac{p}{2}}\right\|^{2(1-a)}_{L^{\frac{2}{p}}(\Omega)}+\left\|u^{\frac{p}{2}}\right\|^{2}_{L^{\frac{2}{p}}(\Omega)}\Big)\\
&\leq&\frac{4(p-1)}{p^2}\varepsilon\left\|\nabla u^{\frac{p}{2}}\right\|^2_{L^2(\Omega)}+C_2\\
&\leq&(p-1)\varepsilon\int_{\Omega}u^{p-2}\varphi(w_1)|\nabla u|^2+C_2
\end{eqnarray*}
with
\begin{equation*}
a=\frac{\frac{p}{2}-\frac{1}{2}}{\frac{p}{2}+\frac{1}{n}-\frac{1}{2}}\in(0,1).
\end{equation*}
This combined with (\ref{lp1}) implies
\begin{eqnarray*}
   &&\frac{1}{p}\frac{d}{dt}\int_{\Omega}u^p\varphi(w_1)+\int_{\Omega}u^p\varphi(w_1)\\
   &&\leq-(p-1)(1-\varepsilon)\int_{\Omega}u^{p-2}\varphi(w_1)|\nabla{u}|^2+\int_{\Omega}|(p-1)\varphi(w_1)-2\varphi'(w_1)|u^{p-1}|\nabla u||\nabla w_1|\\
   &&\quad-\int_{\Omega}\left(\frac{1}{p}\varphi''(w_1)-\varphi'(w_1)\right)u^p|\nabla w_1|^2+C_2\quad\mbox{for all}\ t\in(0, T_{\max}).
\end{eqnarray*}
By (\ref{con3}), we can rewrite it as
\begin{eqnarray*}
   &&\frac{1}{p}\frac{d}{dt}\int_{\Omega}u^p\varphi(w_1)+\int_{\Omega}u^p\varphi(w_1)\\
   &&\leq-\int_{\Omega}\left(\sqrt{(p-1)(1-\varepsilon)\varphi(w_1)}u^{\frac{p-2}{2}}|\nabla{u}|-\sqrt{\left(\frac{1}{p}\varphi''(w_1)-\varphi'(w_1)\right)}u^{\frac{p}{2}}|\nabla w_1|\right)^2\\
   &&\quad+\int_{\Omega}\Phi(w_1)u^{p-1}|\nabla u||\nabla w_1|+C_2\\
   &&\leq C_2\quad\mbox{for all}\ t\in(0, T_{\max}),
\end{eqnarray*}
where
\begin{equation*}
\Phi(w_1):=|(p-1)\varphi(w_1)-2\varphi'(w_1)|-2\sqrt{(p-1)(1-\varepsilon)\varphi(w_1)\left(\frac{1}{p}\varphi''(w_1)-\varphi'(w_1)\right)}=0
\end{equation*}
for all $0\leq w_1\leq\chi_1\|w_0\|_{L^{\infty}(\Omega)}$ due to (\ref{con4}). Therefore, in view of a standard ODE argument, we prove (\ref{up}). Using the second equation and the third equation in (\ref{CSC}), proceeding similarly as for (\ref{up}), we get (\ref{vp}) and hence completes the proof.
\end{proof}

Our main result on the boundedness of the solution in this paper is an immediate consequence of Lemma \ref{improved extensibility} and Lemma \ref{u lp}.

\vskip 3mm

\noindent{\it{Proof of Theorem \ref{main result1}.}} In view of Lemma \ref{improved extensibility} and Lemma \ref{u lp}, it is suffices to verify (\ref{condition}) with $M=M_i:=\chi_i\|w_0\|_{L^{\infty}(\Omega)}$, some $p_i>\frac{n}{2}$ and fixed $\varepsilon_i\in(0,1)$ for $i=1,2$. Since by our assumption $M_i<\sqrt{\frac{2}{n}}\pi$,
we can choose
\begin{equation*}
\varepsilon_i:=\frac{\pi^2-\frac{n}{2}M_i^2}{2\left(\pi^2+\left(\frac{n}{2}\right)^2M_i^2\right)}\in(0,1)
\end{equation*}
such that
\begin{equation*}
M_i=\frac{2}{\sqrt{\frac{n}{2}}}\sqrt{\frac{1-2\varepsilon_i}{1+2\varepsilon_i\frac{n}{2}}}\frac{\pi}{2}
<\frac{2}{\sqrt{\frac{n}{2}}}\sqrt{\frac{1-\varepsilon_i}{1+\varepsilon_i\frac{n}{2}}}\frac{\pi}{2}
\end{equation*}
for $i=1,2$. So that there exists $p_i>\frac{n}{2}$ such that
\begin{eqnarray*}
M_i&=&\frac{2}{\sqrt{p_i}}\sqrt{\frac{1-\varepsilon_i}{1+\varepsilon_ip_i}}\frac{\pi}{2}\\
                            &<&\frac{2}{\sqrt{p_i}}\sqrt{\frac{1-\varepsilon_i}{1+p\varepsilon_i}}\left(\frac{\pi}{2}+\arctan\sqrt{\frac{p_i}{1+(p_i-1)\varepsilon_i-p_i\varepsilon_i^2}}\varepsilon_i\right)
\end{eqnarray*}
for $i=1,2$. This completes the proof.$\hfill\Box$

\section{Stabilization. Proof of Theorem \ref{main result2}}
In this section we consider the asymptotic behavior of the global bounded solutions to (\ref{CSC}).
\begin{lem}
Let the assumptions in Theorem \ref{main result1} hold. Then there exists a constant $C>0$ such that
\begin{equation}\label{estimate1}
\int_0^{\infty}\int_{\Omega}|\nabla u|^2\leq C\quad\mbox{and}\quad\int_0^{\infty}\int_{\Omega}|\nabla v|^2\leq C.
\end{equation}
\end{lem}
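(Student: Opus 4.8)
The plan is to derive a differential inequality for a natural Lyapunov-type functional whose dissipation controls $\int_\Omega|\nabla u|^2$ and $\int_\Omega|\nabla v|^2$, and then integrate it in time. The cleanest choice is to test the first equation of (\ref{CSC}) with $u$ itself (i.e. take $p=2$ but without a weight, or more robustly keep the weight $\varphi(w_1)$ from Lemma \ref{u lp} which is bounded above and below by positive constants by (\ref{con2})). Since Theorem \ref{main result1} already gives us that $u$, $v$ are bounded in $L^\infty(\Omega\times(0,\infty))$ and that $\|w(t)\|_{W^{1,q}(\Omega)}$ is bounded, together with (\ref{bound w}), all the coefficients appearing below are under control. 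First I would compute, using integration by parts and the boundary conditions,
\begin{eqnarray*}
\frac12\frac{d}{dt}\int_\Omega u^2 &=& -\int_\Omega|\nabla u|^2 + \chi_1\int_\Omega u\,\nabla u\cdot\nabla w\\
&\le& -\frac12\int_\Omega|\nabla u|^2 + \frac{\chi_1^2}{2}\int_\Omega u^2|\nabla w|^2,
\end{eqnarray*}
by Young's inequality, and similarly for $v$.

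The remaining task is to bound $\int_0^\infty\int_\Omega u^2|\nabla w|^2$ (and the analogous term for $v$). Here I would exploit the strong dissipation coming from the $w$-equation: testing $w_t=\Delta w-(\alpha u+\beta v)w$ with $-\Delta w$ gives, after integration by parts,
\begin{equation*}
\frac12\frac{d}{dt}\int_\Omega|\nabla w|^2 + \int_\Omega|\Delta w|^2 = \int_\Omega(\alpha u+\beta v)w\,\Delta w,
\end{equation*}
and the right-hand side is controlled by $\tfrac12\int_\Omega|\Delta w|^2 + C\int_\Omega(\alpha u+\beta v)^2 w^2$, the latter being bounded by a constant (using $L^\infty$ bounds on $u,v,w$) times $|\Omega|$, uniformly in $t$. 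Integrating in time yields $\int_0^\infty\int_\Omega|\Delta w|^2<\infty$ and hence, since $\|\nabla w(t)\|_{L^2}$ stays bounded, good control of $\nabla w$ in $L^2$ over long times. To convert this into integrability of $u^2|\nabla w|^2$ I would use boundedness of $u$ in $L^\infty$ to write $\int_\Omega u^2|\nabla w|^2\le\|u(t)\|_{L^\infty}^2\int_\Omega|\nabla w|^2$, and then show $\int_0^\infty\int_\Omega|\nabla w|^2<\infty$: this follows from testing the $w$-equation with $w$, giving $\frac12\frac{d}{dt}\int_\Omega w^2 = -\int_\Omega|\nabla w|^2 - \int_\Omega(\alpha u+\beta v)w^2\le -\int_\Omega|\nabla w|^2$, so integrating gives $\int_0^\infty\int_\Omega|\nabla w|^2\le\frac12\int_\Omega w_0^2<\infty$.

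With $\int_0^\infty\int_\Omega u^2|\nabla w|^2<\infty$ in hand (and likewise with $v$), integrating the first displayed inequality over $(0,\infty)$ and discarding the nonnegative term $\frac12\int_\Omega u^2(t)$ on the left gives $\int_0^\infty\int_\Omega|\nabla u|^2\le\int_\Omega u_0^2 + \chi_1^2\int_0^\infty\int_\Omega u^2|\nabla w|^2 =: C_u<\infty$, and the same argument with the second equation produces the bound for $\int_0^\infty\int_\Omega|\nabla v|^2$. Taking $C:=\max\{C_u,C_v\}$ completes the proof.

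The main obstacle I anticipate is not any single hard estimate but rather making sure the chain of space-time integrability bounds is genuinely \emph{global in time} rather than merely finite on $(0,T_{\max})$; this is where Theorem \ref{main result1} is essential, since it upgrades $T_{\max}=\infty$ and supplies the uniform $L^\infty$ bounds on $u,v,w$ and the uniform $W^{1,q}$ bound on $w$ that keep all constants independent of $t$. A secondary point requiring a little care is the regularity needed to justify testing with $-\Delta w$; if one prefers to avoid that, the simpler route through testing the $w$-equation with $w$ (giving $\int_0^\infty\int_\Omega|\nabla w|^2<\infty$ directly) combined with the $L^\infty$ bound on $u$ already suffices, so the $-\Delta w$ computation can be omitted entirely. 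I would present the argument in the streamlined form: test $w$-equation with $w$; then test $u$- and $v$-equations with $u$ and $v$ respectively; apply Young's inequality and the $L^\infty$ bounds; integrate in time.
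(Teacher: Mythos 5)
Your streamlined argument (test the $w$-equation with $w$ to get $\int_0^\infty\int_\Omega|\nabla w|^2\leq\frac12\int_\Omega w_0^2$, then test the $u$- and $v$-equations with $u$ and $v$, apply Young's inequality together with the $L^\infty$ bounds from Theorem \ref{main result1}, and integrate in time) is exactly the paper's proof, and it is correct. The detour through testing with $-\Delta w$ is unnecessary, as you yourself note, and can simply be dropped.
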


\begin{proof}
Multiplying the third equation in (\ref{CSC}) by $w$ and integrating by parts, we obtain
\begin{equation*}
\frac{1}{2}\frac{d}{dt}\int_{\Omega}w^2+\int_{\Omega}|\nabla w|^2=-\int_{\Omega}(\alpha u+\beta v)w^2\quad\mbox{for all}\ t>0.
\end{equation*}
Since $\alpha$, $\beta$, $u$, $v$ and $w$ are all nonnegative, a time integration over $(0,T)$ yields
\begin{equation}\label{estimate nabla w}
\int_0^{T}\int_{\Omega}|\nabla w|^2\leq\frac{1}{2}\int_{\Omega}w_0^2\quad\mbox{for all}\ T>0.
\end{equation}
We respectively test the first and second equations in (\ref{CSC}) by $u$ and $v$ to obtain
\begin{eqnarray*}
\frac{1}{2}\frac{d}{dt}\int_{\Omega}u^2+\int_{\Omega}|\nabla u|^2&=&\chi_1\int_{\Omega}u\nabla u\cdot \nabla w\\
                                                                 &\leq&\frac{1}{2}\int_{\Omega}|\nabla u|^2
                                                                        +\frac{1}{2}\chi^2_1\|u\|^2_{L^{\infty}(\Omega\times(0,\infty))}\int_{\Omega}|\nabla w|^2
\end{eqnarray*}
and
\begin{eqnarray*}
\frac{1}{2}\frac{d}{dt}\int_{\Omega}v^2+\int_{\Omega}|\nabla v|^2&=&\chi_2\int_{\Omega}v\nabla v\cdot \nabla w\\
                                                                 &\leq&\frac{1}{2}\int_{\Omega}|\nabla v|^2
                                                                        +\frac{1}{2}\chi^2_2\|v\|^2_{L^{\infty}(\Omega\times(0,\infty))}\int_{\Omega}|\nabla w|^2
\end{eqnarray*}
for all $t>0$, where we have used Young's inequality. Integrating over $(0,T)$ and applying (\ref{estimate nabla w}) readily imply (\ref{estimate1}).
\end{proof}
Based on the parabolic regularity argument \cite[Lemma 4.3]{Xue-M3AS-2015}, we have the following H\"{o}lder estimates of $u$ and $v$.
\begin{lem}\label{lem42}
Under the assumptions of Theorem \ref{main result1}, there exists $\theta>0$ and $C>0$ such that
\begin{equation}\label{estimate1}
\|u\|_{C^{\theta,\frac{\theta}{2}}(\bar{\Omega}\times[t,t+1])}\leq C\quad\mbox{and}\quad\|v\|_{C^{\theta,\frac{\theta}{2}}(\bar{\Omega}\times[t,t+1])}\leq C
\end{equation}
for all $t\geq1$.
\end{lem}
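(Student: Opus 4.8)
The plan is to reduce the statement to the standard Hölder regularity theory for scalar parabolic equations in divergence form with bounded measurable coefficients, in the form recorded in \cite[Lemma 4.3]{Xue-M3AS-2015}; the only preparatory step of substance is to upgrade the a priori control on $\nabla w$ from $L^q$ in space to $L^\infty$ in space and time. By Theorem \ref{main result1} the solution is global and there is $K>0$ with $\|u(\cdot,t)\|_{L^\infty(\Omega)}+\|v(\cdot,t)\|_{L^\infty(\Omega)}+\|w(\cdot,t)\|_{W^{1,q}(\Omega)}\le K$ for all $t>0$; together with (\ref{bound w}) this shows that $h:=-(\alpha u+\beta v)w$ is bounded on $\Omega\times(0,\infty)$, and the previous lemma shows $u,v\in L^2_{\mathrm{loc}}([0,\infty);H^1(\Omega))$ with norms uniformly bounded on time-intervals of length one.

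First I would establish a uniform gradient bound for $w$. For $t\ge1$, the variation-of-constants representation $w(\cdot,t)=e^{\Delta}w(\cdot,t-1)+\int_{t-1}^{t}e^{(t-s)\Delta}h(\cdot,s)\,ds$, the smoothing estimates for the Neumann heat semigroup (\cite[Lemma 1.3]{Winkler-JDE-2010}) in the form $\|\nabla e^{\tau\Delta}\phi\|_{L^\infty(\Omega)}\le C(1+\tau^{-1/2})e^{-\lambda_1\tau}\|\phi\|_{L^\infty(\Omega)}$, the bound $\|w(\cdot,t-1)\|_{L^\infty(\Omega)}\le\|w_0\|_{L^\infty(\Omega)}$, the bound $\|h\|_{L^\infty(\Omega\times(0,\infty))}\le C$, and the finiteness of $\int_0^\infty(1+\sigma^{-1/2})e^{-\lambda_1\sigma}\,d\sigma$ together yield a constant $K_1>0$ with $\|\nabla w\|_{L^\infty(\Omega\times(1,\infty))}\le K_1$. (Equivalently, $L^p$ maximal parabolic regularity applied to $w_t-\Delta w=h$ with $h\in L^\infty$ gives a $W^{2,1}_p$-bound on $\Omega\times(t-1,t+1)$ for every $p<\infty$, uniform in $t\ge1$, and $W^{2,1}_p\hookrightarrow C^{1+\theta,(1+\theta)/2}$ for large $p$ yields the same conclusion.)

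Next I would rewrite the first equation in divergence form, $u_t=\nabla\!\cdot\!\bigl(\nabla u-\chi_1 u\nabla w\bigr)$ in $\Omega\times(1,\infty)$, and observe that the associated conormal flux satisfies $\bigl(\nabla u-\chi_1 u\nabla w\bigr)\cdot\nu=\partial_\nu u-\chi_1 u\,\partial_\nu w=0$ on $\partial\Omega$ by the boundary conditions in (\ref{CSC}), so this is a scalar divergence-form equation with homogeneous conormal boundary condition, elliptic coefficient $\equiv1$, and first-order perturbation $\vec b:=-\chi_1 u\nabla w$ satisfying $\|\vec b\|_{L^\infty(\Omega\times(1,\infty))}\le\chi_1 KK_1$. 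Since $u$ is bounded and belongs to $L^2_{\mathrm{loc}}([1,\infty);H^1(\Omega))$, the interior-and-boundary Hölder estimate of \cite[Lemma 4.3]{Xue-M3AS-2015} (a consequence of the De Giorgi--Nash--Moser theory up to the boundary) furnishes $\theta\in(0,1)$ and $C>0$, depending only on $n$, $\Omega$, $\chi_1$, $K$ and $K_1$, such that $\|u\|_{C^{\theta,\theta/2}(\bar\Omega\times[t,t+1])}\le C$ for all $t\ge1$. Applying the same reasoning to the second equation, with $\chi_1$ replaced by $\chi_2$ and $\vec b$ by $-\chi_2 v\nabla w$, yields the corresponding bound for $v$; taking the smaller exponent and the larger constant proves Lemma \ref{lem42}.

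I expect the gradient upgrade for $w$ to be the only delicate point: because $w_0$ lies merely in $W^{1,q}(\Omega)$ one cannot hope for an $L^\infty$-bound on $\nabla w$ up to $t=0$, so one must wait one unit of time and feed the a priori $L^\infty$-bounds on $u$, $v$, $w$ provided by Theorem \ref{main result1} into the semigroup (or maximal-regularity) estimate; once $\nabla w\in L^\infty(\Omega\times(1,\infty))$ is available, the Hölder estimates are a direct citation of the parabolic regularity theory and require no further work.
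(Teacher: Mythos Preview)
Your proposal is correct, and the core idea---writing $u_t=\nabla\cdot(\nabla u-\chi_1 u\nabla w)$ and invoking De Giorgi--Nash--Moser type H\"older regularity---is exactly what the paper does. The difference lies in how the lower-order drift is handled.

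You treat the upgrade $\nabla w\in L^\infty(\Omega\times(1,\infty))$ as ``the only delicate point'' and devote a full step to it via the heat semigroup. The paper skips this entirely: it observes that the $W^{1,q}$-bound from Theorem~\ref{main result1} already gives $|\nabla w|^2\in L^\infty((0,\infty);L^{q/2}(\Omega))$ with $q/2>n/2$, and this integrability level is precisely what the Porzio--Vespri result \cite[Theorem~1.3]{Porzio-JDE-1993} requires of the lower-order coefficient. The paper verifies the structure condition $(\nabla u-\chi_1 u\nabla w)\cdot\nabla u\ge\tfrac12|\nabla u|^2-\tfrac12\chi_1^2\|u\|_{L^\infty}^2|\nabla w|^2$ and the growth bound $|\nabla u-\chi_1 u\nabla w|\le|\nabla u|+\chi_1\|u\|_{L^\infty}|\nabla w|$, then applies the theorem directly. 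So your semigroup upgrade, while perfectly valid, is unnecessary; the regularity theory already accommodates the $L^{q/2}_x$ integrability you had before upgrading. What your route buys is that after the upgrade the drift is genuinely bounded, so one can cite cruder versions of the H\"older estimate; what the paper's route buys is brevity.
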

\begin{proof}
We first note that
\begin{eqnarray*}
(\nabla u-\chi_1u\nabla w)\cdot\nabla u&=&|\nabla u|^2-\chi_1u\nabla u\cdot\nabla w\\
                                       &\geq&\frac{1}{2}|\nabla u|^2-\frac{1}{2}\chi_1^2\|u\|^2_{L^{\infty}(\Omega\times(0,\infty))}|\nabla w|^2
\end{eqnarray*}
and
\begin{equation*}
|\nabla u-\chi_1u\nabla w|\leq|\nabla u|+\chi_1\|u\|_{L^{\infty}(\Omega\times(0,\infty))}|\nabla w|.
\end{equation*}
Since $|\nabla w|^2\in L^{\infty}((0,\infty);L^{\frac{q}{2}})$ with $q>n$, an application of standard parabolic regularity estimates \cite[Theorem 1.3]{Porzio-JDE-1993} yields $\theta_1>0$ and $C_1>0$ such that
\begin{equation*}
\|u\|_{C^{\theta_1,\frac{\theta_1}{2}}(\bar{\Omega}\times[t,t+1])}\leq C_1\quad\mbox{for all}\ t\geq1.
\end{equation*}
By a similar reasoning, we have $\theta_2>0$ and $C_2>0$ fulfilling
\begin{equation*}
\|v\|_{C^{\theta_2,\frac{\theta_2}{2}}(\bar{\Omega}\times[t,t+1])}\leq C_2\quad\mbox{for all}\ t\geq1.
\end{equation*}
This completes the proof.
\end{proof}

In the proof of the stabilization of the solution, we shall need the following common statement which is proved in \cite[Lemma 4.6]{Hirata-JDE-2017}.
\begin{lem}\label{lem43}
Let $n\in C^0(\bar{\Omega}\times[0,\infty))$ satisfy that there exist $C>0$ and $\theta\in(0,1)$ such that
\begin{equation*}
\|n\|_{C^{\theta,\frac{\theta}{2}}(\bar{\Omega}\times[t,t+1])}\leq C\quad\mbox{for all}\ t\geq1.
\end{equation*}
If
\begin{equation*}
\int_0^{\infty}\int_{\Omega}(n(x,t)-N)^2<\infty
\end{equation*}
with some constant $N>0$, then we have
\begin{equation*}\label{estimate1}
n(\cdot,t)\to N\quad \mbox{in}\ C^0(\bar{\Omega})\quad\mbox{as}\quad\ t\to\infty.
\end{equation*}
\end{lem}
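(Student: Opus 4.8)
The final statement to be proved is Lemma \ref{lem43}, a general convergence criterion: if $n$ enjoys uniform H\"older bounds on unit time intervals and $\int_0^\infty\int_\Omega(n-N)^2<\infty$, then $n(\cdot,t)\to N$ uniformly. The plan is to argue by contradiction: suppose the conclusion fails, so there exist $\eta>0$ and a sequence of times $t_k\to\infty$ with $\|n(\cdot,t_k)-N\|_{L^\infty(\Omega)}\geq\eta$ for all $k$. Pick $x_k\in\bar\Omega$ realizing $|n(x_k,t_k)-N|\geq\eta$. We may assume the $t_k$ are spaced at least one unit apart, so that the space-time cylinders $\bar\Omega\times[t_k,t_k+1]$ are pairwise disjoint.

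The core of the argument is that the uniform H\"older bound forces a definite amount of $L^2$ mass of $(n-N)^2$ to accumulate in each such cylinder, which contradicts summability. Concretely, on the interval $[t_k,t_k+\tau]$ for small $\tau\in(0,1]$ and for points $x$ near $x_k$, the H\"older estimate $\|n\|_{C^{\theta,\theta/2}(\bar\Omega\times[t_k,t_k+1])}\leq C$ gives
\begin{equation*}
|n(x,t)-n(x_k,t_k)|\leq C\left(|x-x_k|^\theta+|t-t_k|^{\theta/2}\right),
\end{equation*}
so choosing $\tau$ and a spatial radius $\rho$ (both depending only on $\eta$, $C$, $\theta$) small enough that the right-hand side stays below $\eta/2$, we obtain $|n(x,t)-N|\geq\eta/2$ on $B_\rho(x_k)\cap\Omega$ for $t\in[t_k,t_k+\tau]$. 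Since $\Omega$ has smooth (hence Lipschitz) boundary, $|B_\rho(x_k)\cap\Omega|\geq c_0\rho^n$ for a constant $c_0>0$ independent of $k$. Integrating yields
\begin{equation*}
\int_{t_k}^{t_k+\tau}\int_\Omega(n-N)^2\geq \frac{\eta^2}{4}\,c_0\rho^n\,\tau=:\delta>0
\end{equation*}
for every $k$, and since the cylinders $[t_k,t_k+\tau]$ are disjoint, $\int_0^\infty\int_\Omega(n-N)^2\geq\sum_k\delta=\infty$, contradicting the hypothesis. Hence $\|n(\cdot,t)-N\|_{L^\infty(\Omega)}\to0$, which is exactly convergence in $C^0(\bar\Omega)$.

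The only mild technical point — and the step I would be most careful about — is the lower bound $|B_\rho(x_k)\cap\Omega|\geq c_0\rho^n$ uniform in $k$ when $x_k$ lies on or near $\partial\Omega$; this is standard for bounded Lipschitz domains (interior cone condition) and $\rho$ can be taken fixed once $\eta$ is fixed, so the constant is genuinely uniform. Everything else is elementary. I would also remark that the hypotheses hold in our application with $n$ replaced by $u$ and $N=\bar u_0$ (resp. $v$ and $\bar v_0$): the H\"older bound is Lemma \ref{lem42}, and the finiteness of $\int_0^\infty\int_\Omega(u-\bar u_0)^2$ follows by combining the spatial Poincar\'e--Wirtinger inequality $\|u-\bar u_0\|_{L^2(\Omega)}^2\leq C_P\|\nabla u\|_{L^2(\Omega)}^2$ (valid since $\bar u_0$ is the mean of $u$, which is conserved by \eqref{mc}) with the gradient estimate \eqref{estimate1}; likewise for $v$.
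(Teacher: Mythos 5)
Your proof is correct. The paper does not prove Lemma \ref{lem43} itself but simply cites \cite[Lemma 4.6]{Hirata-JDE-2017}; your contradiction argument---using the uniform H\"older bound to produce pairwise disjoint space--time cylinders of fixed positive measure on which $|n-N|\geq\eta/2$, contradicting the finiteness of $\int_0^{\infty}\int_{\Omega}(n-N)^2$---is exactly the standard proof of that cited result, and your care about the uniform lower bound $|B_\rho(x_k)\cap\Omega|\geq c_0\rho^n$ near $\partial\Omega$ is justified for a smooth bounded domain.
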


\vskip 3mm
With the above lemmas at hand, we are now in the position to prove our main result on stabilization of solutions.

\noindent{\it{Proof of Theorem \ref{main result2}.}} We apply the Poincar\'e inequality
\begin{equation*}
\left\|\varphi-\frac{1}{|\Omega|}\int_{\Omega}\varphi\right\|^2_{L^2(\Omega)}\leq C(n,\Omega)\|\nabla\varphi\|^2_{L^2(\Omega)},\quad\varphi\in W^{1,2}(\Omega)
\end{equation*}
and (\ref{estimate1}) to get $C_1>0$ such that
\begin{eqnarray*}
\int_0^{\infty}\int_{\Omega}|u(x,t)-\bar{u}_0|^2&\leq&C(n,\Omega)\int_0^{\infty}\int_{\Omega}|\nabla u|^2\\
                                                &\leq&C_1
\end{eqnarray*}
and
\begin{eqnarray*}
\int_0^{\infty}\int_{\Omega}|v(x,t)-\bar{v}_0|^2&\leq&C(n,\Omega)\int_0^{\infty}\int_{\Omega}|\nabla v|^2\\
                                                &\leq&C_1.
\end{eqnarray*}
By means of Lemma \ref{lem42} and Lemma \ref{lem43} we thereby obtain that
\begin{equation}\label{stability}
u(\cdot,t)\to\bar{u}_0\quad\mbox{and}\quad v(\cdot,t)\to\bar{v}_0\quad \mbox{in}\ C^0(\bar{\Omega})
\end{equation}
as $t\to\infty$. Next we prove the decay property of $w$. Aided by (\ref{stability}), we have $T>0$ such that
\begin{equation*}
u(\cdot,t)\geq\frac{\bar{u}_0}{2}\quad\mbox{and}\quad v(\cdot,t)\geq\frac{\bar{v}_0}{2}
\end{equation*}
for all $t\geq T$. Then from the third equation in (\ref{CSC}), we get
\begin{equation*}
w_t\leq\Delta w-\frac{1}{2}(\alpha\bar{u}_0+\beta\bar{v}_0)w\quad\mbox{for all}\ t\geq T,
\end{equation*}
which yields
\begin{equation*}
w(\cdot,t)\leq\|w(\cdot,T)\|_{L^{\infty}(\Omega)}e^{-\frac{1}{2}(\alpha\bar{u}_0+\beta\bar{v}_0)(t-T)}\quad\mbox{for all}\ t\geq T,
\end{equation*}
and thereby completes the proof.$\hfill\Box$
\vskip 6mm
\noindent{{\bf{Acknowledgements.}}
This work is supported by National Natural Science Foundation of China (No. 11601127) and China Postdoctoral Science Foundation (No. 2018M630824).
%%%%%%%%%%%%%%%%%%%%%%%%%%%%%%%%%%%%%%%%%%%%%%%%%%%%%%%%%%%%%%%%%%%%%%%%%%%%%%%%

%\bibliographystyle{siam}
%\bibliography{Ref}

\end{document}